\DeclareMathAlphabet{\pazocal}{OMS}{zplm}{m}{n}
\definecolor{darkred}{rgb}{.7,0,0}
\definecolor{darkgreen}{rgb}{0,0.7,0}
\definecolor{darkblue}{rgb}{0,0,0.7}
\title{Deterministic homogenization for fast-slow systems with chaotic noise}
\author{David Kelly$^{\star}$ \quad Ian Melbourne$^{\dagger}$}
\institute{$\star$\;Courant Institute, New York University, NY, USA.\\ \email{dtkelly@cims.nyu.edu}\\ \;$\dagger$\;\;Mathematics Institute, University of Warwick, Coventry CV4 7AL, UK. \email{I.Melbourne@warwick.ac.uk} }
\newcommand{\naturals}{\mathbb{N}}
\newcommand{\ahat}{\tilde{a}}
\newcommand{\textand}{\quad\text{and}\quad}
\newcommand{\xepsR}{x_{\eps,R}}
\newcommand{\Weps}{W_\eps}
\newcommand{\Veps}{V_\eps}
\newcommand{\xeps}{x_{\eps}}
\newcommand{\atilde}{\tilde{a}}
\newcommand{\edist}{\stackrel{dist}{=}}
\newcommand{\floor}[1]{\lfloor #1 \rfloor}
\newcommand{\CA}{\pazocal{A}}
\newcommand{\yeps}{y_{\eps}}
\newcommand{\auto}{\mathfrak{B}}
\newcommand{\diffu}{\mathfrak{A}}
\newcommand{\anti}{\mathfrak{D}}
\newcommand{\rbar}{{\bar{r}}}
\newcommand{\BE}{\mathbf{E}}
\newcommand{\BP}{\mathbf{P}}
\newcommand{\gen}{\pazocal{L}}
\newcommand{\BW}{\mathbf{W}}
\newcommand{\BBW}{\mathbb{W}}
\newcommand{\Wtilde}{\widetilde{W}}
\newcommand{\vhat}{\tilde{v}}
\newcommand{\what}{\tilde{w}}
\newcommand{\normy}[1]{|\!|\!| #1 |\!|\!|}
\newcommand{\CB}{\pazocal{B}}
\newcommand{\CF}{\pazocal{F}}
\newcommand{\BBB}{\mathbb{B}}
\newcommand{\Aeps}{A^{(\eps)}}
\newcommand{\R}{{\mathbb R}}
\newcommand{\CC}{\mathcal{C}}
\newcommand{\abar}{\bar{a}}
\newcommand{\mnu}{\mu}
\newcommand{\BPmnu}{\mnu}
\begin{document}
\maketitle
\large

\begin{abstract}Consider a fast-slow system of ordinary differential equations of the form $\dot x=a(x,y)+\eps^{-1}b(x,y)$, $\dot y=\eps^{-2}g(y)$, where it is assumed that $b$ averages to zero under the fast flow generated by $g$.  We give conditions under which solutions $x$ to the slow equations converge weakly to an It\^o diffusion $X$ as $\eps\to0$.  The drift and diffusion coefficients of the limiting stochastic differential equation satisfied by $X$ are given explicitly.

Our theory applies when the fast flow is Anosov or Axiom~A, as well as to a large class of nonuniformly hyperbolic fast flows (including the one defined by the well-known Lorenz equations), and our main results do not require any mixing assumptions on the fast flow.
\end{abstract}

\section{Introduction}\label{s:intro}
Let $\{\phi_t\}_{t\geq 0}$ be a smooth, deterministic flow on a finite dimensional manifold $M$, with invariant ergodic probability measure $\mnu$. One should think of $\phi_t$ as the flow generated by an ordinary differential equation (ODE) with a chaotic invariant set $\Omega\subset M$ and $\mnu$ supported on $\Omega$.  Define $y(t) = \phi_t y_0$ where the initial condition $y_0$ is chosen at random according to $\mnu$. Hence $y(t) = y(t, y_0)$ is a random variable on the probability space $(\Omega,\mnu)$; from here on we omit $y_0$ from the notation, as is conventional with random variables. Let $a,b : \reals^d \times M \to \reals^d$ be vector fields with suitable regularity assumptions. We are interested in the asymptotic behaviour of the ODE
\begin{equ}
\frac{dx^{(\eps)}}{dt} = \eps^2 a(x^{(\eps)}\!,y) + \eps b(x^{(\eps)}\!,y)  \quad \;, \quad x^{(\eps)}(0) = \xi
\end{equ}
as $\eps \to 0$ and $t \to \infty$, with $\eps^2 t$ remaining fixed. The initial condition $\xi \in \reals^d$ is assumed deterministic. Due to the dependence on $y_0$, we interpret $x^{(\eps)}$ as a random variable on $\Omega$ taking values in the space of continuous functions $C([0,T], \reals^d)$ for some finite $T>0$. 
\par
To make the statement of convergence precise, we define $\yeps(t) = y(\eps^{-2}t)$ and $\xeps$ as the solution to the ODE
 \begin{equ} \label{e:fastslow}
\frac{d\xeps}{dt} =  a(\xeps,\yeps) + \frac{1}{\eps}b(\xeps,\yeps) \quad , \quad x_\eps(0) = \xi\;.
\end{equ}
In particular, we arrive at this equation under the rescaling $t \mapsto t/\eps^2$ and setting $\xeps(t) = x^{(\eps)}(t/\eps^2)$. Our aim is to identify the limiting behavior of the random variable $\xeps$ on the space of continuous functions as $\eps \to 0$. 
\par
Under certain assumptions on the fast flow $\phi_t$, it is known that $x_\eps \to_w X$ where $X$ is an It\^o diffusion, and where $\to_w$ denotes weak convergence of random variables on the space $C([0,T], \reals^d)$. 
At an intuitive level, 
the $a$ term \emph{averages} to an ergodic mean, via a law of large numbers type effect and 
the $b$ term \emph{homogenizes} to a stochastic integral, via a central limit theorem type effect. This type of problem is often referred to as \emph{deterministic} homogenization, since the randomness is not coming from a typical stochastic process, but rather from a ergodic dynamical system with random initial condition. 
\par 
Assuming rather strong mixing conditions on $\phi_t$, one can show that $x_\eps$
converges weakly to the solution $X$ of an It\^o SDE
\begin{equ}\label{e:papa_sde}
dX = \ahat (X) dt + \sigma(X) dB \quad \;, \quad X(0) = \xi
\end{equ}
where $B$ is an $\reals^d$ valued standard Brownian motion, the drift $\ahat : \reals^d \to \reals^d$ is given by 
\begin{equ}
\ahat^i(x) = 
\int_\Omega a^i(x,y)d\mnu(y) +
\int_0^\infty \int_\Omega b(x,y)\cdot\nabla b^i(x,\phi_ty)d\mnu(y)dt  
\end{equ} 
for all $i = 1,\dots, d$ and the diffusion coefficient $\sigma : \reals^d \to \reals^{d\times d}$ is given by 
 \begin{equ}
\sigma (x) \sigma^T(x) = \int_0^\infty\int_\Omega 
\bigl(b(x,y)\otimes b(x,\phi_t y)+ (b(x,\phi_t y)\otimes b(x,y)\bigr)d\mnu(y)dt \;.
 \end{equ}
The mixing assumptions required on $\phi_t$ are typically very strong. For instance, the above result follows from \cite{papa74} under the assumption of phi mixing with rapidly decaying mixing coefficient ($L^{1/2}$-integrable). Such an assumption is quite reasonable in the setting of ergodic Markov processes (as intended in \cite{papa74}). Unfortunately this is quite unreasonable for general ergodic flows. In particular, for most natural \emph{deterministic} situations it is difficult to prove any mixing properties at all. On top of that, it is seldom clear that the formulas for $\ahat$ and $\sigma$ are even well-defined. 

\par
In this article, we show that for a very general class of ergodic flows, the above result holds with explicit (but sometimes more complicated) formulas for $\ahat$ and $\sigma$ that generalise the ones given above.

\subsection{Anosov and Axiom A flows}
One well-known class of fast flows to which our results
apply is given by the Axiom A (uniformly hyperbolic) flows introduced by Smale \cite{smale67}.
This includes Anosov flows \cite{anosov67}. We do not give the precise definitions, since they
are not needed for understanding the paper, but a rough description is as follows.
(See \cite{bowen75,ruelle78,sinai70} for more details.)
Let $\phi_t : M \to M $ be a $C^2$ flow defined on a compact manifold $M$. A flow-invariant
subset $\Omega \subset M$ is uniformly hyperbolic if for all $x \in \Omega$ there exists a $D \phi_t$-invariant splitting transverse to the flow into uniformly contracting and expanding directions. The flow is Anosov if the whole of $M$ is uniformly hyperbolic. More generally, an
Axiom A flow is characterised by the property that the dynamics decomposes into
finitely many hyperbolic equilibria and finitely many uniformly hyperbolic subsets
$\Omega_1,\dots ,\Omega_k$, called hyperbolic basic sets, such that the flow on each $\Omega_i$ is transitive (there is a dense orbit). If $\Omega$ is a hyperbolic basic set, there is a unique $\phi_t$-invariant ergodic probability measure (called an equilibrium measure) associated to each H\"older function on $\Omega$. (In the special case that $\Omega$ is an attractor, there is a distinguished equilibrium measure called the physical measure or SRB measure (after Sinai, Ruelle, Bowen).) In the remainder of the introduction, we assume that $\Omega$ is a hyperbolic basic set with equilibrium measure $\mnu$ (corresponding to a H\"older potential). We exclude the trivial case where $\Omega$ consists of a single periodic orbit.

Given $b:\R^d\times M\to\R$, we define the mixed H\"older norm
 \begin{equ}
\norm{b}_{C^{\alpha,\kappa}}= \sum_{|k| \leq \floor{\alpha}} \sup_{x \in \reals^d} \norm{D^k b(x,\cdot)}_{C^{\kappa}} + \sum_{|k|= \floor{\alpha}} \sup_{x,z \in \reals^d} \frac{\norm{D^k b(x,\cdot) - D^k b(z,\cdot)}_{C^\kappa}}{|x-z|^{\alpha-\floor{\alpha}}}
\end{equ}
for $\alpha\in[0,\infty)$, $\kappa\in[0,1)$, where the second summation is omitted when $\alpha$ is an integer.
Here $D^k$ is the differential operator acting in the $x$ component and $\norm{\cdot}_{C^\kappa}$ is the standard H\"older norm acting in the $y$ component.
If $b:\R^d\times M\to\R^m$ is vector-valued, we define
$\norm{b}_{C^{\alpha,\kappa}}=\sum_{i=1}^m\norm{b^i}_{C^{\alpha,\kappa}}$.

We write $b\in C^{\alpha,\kappa}(\R^d\times M,\R^m)$ if $\norm{b}_{C^{\alpha,\kappa}}<\infty$.
Let $C^\kappa_0(\R^d\times M,\reals^m)$ denote the space of observables $b\in C^{\alpha,\kappa}(\R^d\times M,\R^m)$
with $\int_\Omega b(x,y)d\mnu(y)=0$ for all $x\in\R^d$. When $m=1$, we write $C^{\alpha,\kappa}(\R^d\times M)$ 
instead of $C^{\alpha,\kappa}(\R^d\times M,\R^m)$ and so on.
We also write $C_0^\kappa(\Omega,\R^m)$ to denote $C^\kappa$ observables
$v:\Omega\to\R^m$ with mean zero. We now state the main result. 
 
\begin{thm}\label{thm:axioma}
Let $\Omega \subset M$ be a hyperbolic basic set with equilibrium measure $\mnu$.
Let $\kappa>0$ and suppose 
that $a\in C^{1+,0}(\reals^d\times M,\reals^d)$, $b\in C^{2+,\kappa}_0(\reals^d\times M,\reals^d)$.
Then 
\begin{itemize}
\item[(i)] The limit 
\[
\auto(v,w)=\lim_{n\to\infty}n^{-1}\int_\Omega \int_0^n\int_0^s v\circ\varphi_rw\circ\varphi_sdrds,
\]
exists for all $v,w\in C_0^\kappa(\Omega)$ and the resulting bilinear
operator $\auto:C_0^\kappa(\Omega) \times C_0^\kappa(\Omega)\to\R$
is bounded and positive semidefinite.
\item[(ii)]  The drift and diffusion coefficients given by
\begin{equ}
\ahat^i(x) = \int a^i(x,y)d\mnu(y) +  \sum_{k=1}^d \auto(b^k(x,\cdot),\del_k b^i(x,\cdot))  \quad, \quad i=1,\dots,d\;, 
\end{equ}
\begin{equ}
(\sigma (x) \sigma^T(x))^{ij} = \auto(b^i(x,\cdot),b^j(x,\cdot)) + \auto(b^j(x,\cdot),b^i(x,\cdot)) \quad, \quad  
i,j =1, \dots, d \;,
 \end{equ}
are Lipschitz.
\item[(iii)] 
The family of solutions $\xeps$ to the ODE~\eqref{e:fastslow} converges weakly in the supnorm 
topology to the unique solution $X$ of the SDE
\begin{equ}\label{e:sde}
dX = \ahat(X)dt + \sigma(X)dB \quad , \quad X(0) = \xi
\end{equ}
where $B$ is a standard Brownian motion in $\R^d$.
\item[(iv)] Let $v,w\in C^\kappa_0(\Omega)$.  If in addition the integral $\int_0^\infty \int_\Omega v\, w\circ \phi_t   d\mnu dt $ exists, then 
\begin{equ}
\auto (v,w) = \int_0^\infty \int_\Omega v \,w\circ \phi_t d\mnu dt \;.
\end{equ}
\end{itemize}
\end{thm}

By part (i), the $\auto$-terms in the formulas for $\tilde a$ and $\sigma$ can be written as
\begin{equ}
\auto(b^k(x,\cdot),\del_k b^i(x,\cdot)) = \lim_{n\to\infty} n^{-1} \int_\Omega \int_0^{n} \int_0^s b^k(x,\phi_r y) \del_k b^i(x,\phi_s y) dr ds  d\mnu(y)
\end{equ}
and 
\begin{equ}
\auto(b^i(x,\cdot),  b^j(x,\cdot)) = \lim_{n\to\infty} n^{-1} \int_\Omega \int_0^{n} \int_0^s b^i(x,\phi_r y)  b^j(x,\phi_s y) dr ds  d\mnu(y)\;.
\end{equ}
By part (iv), if the integrals $\int_0^\infty \int_\Omega b^i(x,y) b^j(x,\phi_t y)  d\mnu(y) dt $, $\int_0^\infty \int_\Omega b^k(x,y) \del_k b^i(x,\phi_t y)  d\mnu(y) dt $ exist for all $i,j,k$ and $x\in\reals^d$, then the coefficients $\tilde a$ and $\sigma$ are given by the formulas in \eqref{e:papa_sde}.  In general, even for nonmixing flows $\phi_t$, the bilinear operator $\auto$ can still be written down explicitly, in terms of the finer structure of the flow, see~\eqref{eq-B}.
\begin{rmk}
The Lipschitz statement in Theorem~\ref{thm:axioma}(ii) follows from boundedness of $\auto$ together with the regularity assumptions on $a$ and $b$.  A consequence of this is the uniqueness of the limiting diffusion $X$ as stated in part (iii).
\end{rmk}

\begin{rmk}
Since the expression defining $\sigma\sigma^T$ is symmetric and positive semidefinite,
a square root $\sigma$ always exists. Also, it is a standard result that the diffusion $X$ is independent of the choice of any square root $\sigma$. 
\end{rmk}

\begin{rmk}\label{rmk:sigma_D}
In the special case $b(x,y)=h(x)v(y)$ considered 
in \cite{kelly14}, $\auto(v^i,v^j)$ is defined through its symmetric part, denoted $\frac{1}{2}\Sigma^{ij}$ and its anti-symmetric part, denoted $\frac{1}{2}D^{ij}$. One easily recovers the above via the It\^o-Stratonovich correction. 
\end{rmk}

\subsection{Non-uniformly hyperbolic flows}
For the sake of exposition, we have stated the homogenization results for
fast flows that are uniformly hyperbolic. In reality, the results apply much more generally. The convergence result stated in Theorem \ref{thm:axioma}(iii) can be recast in an abstract framework. In brief, we only require that $\phi_t$ satisfies an {iterated central limit theorem} (CLT) along with a moment control estimate. As shown in \cite{kelly14}, these assumptions hold true for broad classes of flows which have a Poincar\'e map modelled by a Young tower \cite{young98,young99}. For example, the convergence result in Theorem \ref{thm:axioma}(iii) holds for the classical Lorenz equations. We provide a rigorous statement of the abstract formulation in Section \ref{s:fast}.

\subsection{Previous results}

It is only fairly recently that results on homogenization have been obtained in a fully deterministic setting with realistic assumptions on the fast dynamics.  The first such results were obtained by~\cite{dolgopyat04,dolgopyat05} for discrete time systems where the fast dynamics is uniformly or partially hyperbolic with sufficiently fast decay of correlations.  A program to remove assumptions on decay of correlations on the fast dynamics was initiated in~\cite{stuart11} where the authors prove a result on homogenization for uniformly and nonuniformly hyperbolic flows that are not necessarily mixing, but under the assumption that the noise appears additively in the slow ODE, that is $b(x,y) = h(y)$.   This was extended to the case of multiplicative noise
$b(x,y) = h(x) v(y)$ in the scalar case $d=1$ by \cite{gottwald13} who also
treated the discrete time situation.   
The case $b(x,y)=h(x)v(y)$ was treated in general dimensions in \cite{kelly14} (again for both discrete and continuous time).
We remark that the results of the current article should carry over to the discrete time setting, but this requires additional work to incorporate the discrete time rough path theory introduced in \cite{kelly14a}.
\par
Homogenization results for chaotic systems have many interesting physical applications, most notably in stochastic climate models \cite{majda99}. For more examples, see \cite[Section 11.8]{stuart08}. 

\subsection{Outline of the article}\label{sec:strategy}

To prove Theorem \ref{thm:axioma} (or more precisely Theorem \ref{thm:abstract}, the abstract version) we reformulate the slow equation as an ODE of the form
\begin{equ}
d\xeps = F(\xeps)d\Veps + H(\xeps)d\Weps \;,
\end{equ}
where $\Veps$ and $\Weps$ are function space valued paths that are smooth (in time) for each fixed $\eps$. The path $\Veps$ is a smooth approximation of a function space valued drift and the path $\Weps$ is a smooth approximation of a function space valued Brownian motion. To be precise, we take
\begin{equ}
\Veps(t) = \int_0^t a(\cdot, \yeps(r))dr \textand \Weps(t) = \eps^{-1}\int_0^t b(\cdot, \yeps(r))dr\;.
\end{equ}
The operators $F(x), H(x)$ are Dirac distributions (evaluation maps) located at $x$, that is $F(x) \vphi = \vphi(x)$ for any $\vphi $ in the function space and similarly for $H$. 
\begin{rmk}
Note that although $F,H$ are both Dirac distributions, they will act on different domains, hence the different labels.\end{rmk}
One should think of the pair $(\Veps,\Weps)$ as ``noise'' driving the solution $\xeps$. Using the theory of rough paths, we build a continuous solution map from the ``noise space'' into the ``solution space''. The ``noise space'' will contain not just smooth paths, but also paths of Brownian regularity (which is the type of regularity we expect from the limiting $W_\eps$). Since the solution map is continuous, a weak convergence result for the noise processes can be lifted to a weak convergence result for the solution, via the continuous mapping theorem. 
\par
The outline of the article is as follows. In Section \ref{s:fast} we write the abstract formulation of Theorem \ref{thm:axioma}; this constitutes the main result of the article. In Section \ref{s:rough} we give an overview of rough path theory and state the tools that will be used. In Sections \ref{s:convergence}, \ref{s:conv_to_RDE} and \ref{s:rde_ito} we state and prove a localized version of the main result. In Section \ref{s:localization} we lift the localized result to the full result.

\subsection{Notation}

We write $\BE_\mnu$ for expectation with respect to $\mnu$ and write $\BE$ when referring to expectation on a generic probability space. 
We write for example $a\in C^{1+}$ if there exists $\alpha>1$ such that
$a\in C^\alpha$.
For a normed linear space $\CB$ we write $L(\CB,\reals)$ for the space of bounded linear functionals on $\CB$, with the usual norm $\norm{f}_{L(\CB,\reals)} = \sup_{ \norm{x}_{\CB} = 1} |f (x)|$.  
We write
$a_n\lesssim b_n$ as $n\to\infty$ if there is a constant
$C>0$ such that $a_n\le Cb_n$ for all $n\ge1$.

\section{The abstract convergence result}\label{s:fast}

We now state an abstract version of Theorem \ref{thm:axioma}. 
Let $\phi_t:M\to M$ be a smooth flow on a finite dimensional manifold and suppose that 
$\Omega\subset M$ is a closed flow-invariant set with ergodic probability measure $\mnu$.
For $v\in L^1(\Omega,\reals^m)$ with $\int_\Omega v d\mnu = 0$, we define
\begin{equ}
W_{v,n}(t)=n^{-1/2}\int_0^{tn} v\circ \phi_s\,ds \textand
\BBW_{v,n}(t)=n^{-1}\int_0^{tn}\int_0^s v\circ \phi_r \otimes v\circ \phi_s drds\;.
\end{equ}
By definition of the tensor product for vectors, $\BBW_{v,n}$ takes values in $\reals^{m\times m}$. 

For $v,w\in L^1(\Omega,\reals)$, we define
\begin{equ}
v_t =  \int_0^t v \circ \phi_s ds \textand S_t = \int_0^t \int_0^s v\circ \phi_r w \circ \phi_s drds\;. 
\end{equ}

Fix $\kappa > 0$. 
The abstract assumptions are as follows. 

\begin{ass}\label{ass:wip}\label{ass:clt} There exists a bilinear operator $\auto : C^\kappa_0(\Omega) \times C^\kappa_0(\Omega) \to \reals$ such that for every $v \in C^\kappa_0 (\Omega, \reals^m)$,
\begin{equ}
(W_{v,n},\BBW_{v,n}) \to (W_v,\BBW_v)
\end{equ}
as $n\to\infty$, in the sense of finite dimensional distributions, where $W_v$ is a Brownian motion in $\reals^m$ and $\BBW_v$ is the process with values in $\R^{m\times m}$ defined by 
\begin{equ}
\BBW_v^{ij}(t) = \int_0^t W_v^i dW_v^j + \auto(v^i , v^j) t\;.
\end{equ}
(Here, the integral is of It\^o type.)
\end{ass}

\begin{ass}\label{ass:moments}
There exists $p >3$, and for all $v,w\in C^\kappa_0(\Omega)$  there exists \mbox{$K=K_{v,w,p}>0$} such that 
\begin{equ}
(\BE_\mnu |v_t|^{2p})^{1/(2p)} \le K t^{1/2} \textand (\BE_\mnu |S_t|^{p})^{1/p} \le K t
\end{equ}
for all $t\ge0$. If the estimates hold for all $p > 3$ then we say the estimates hold for $p=\infty$. 
\end{ass}

\begin{thm}\label{thm:abstract}
Suppose that Assumptions \ref{ass:clt} and \ref{ass:moments} hold with some $p \in (3,\infty]$ and $\kappa>0$. 
Suppose that 
$a\in C^{1+,0}(\reals^d\times M,\R^d)$ and $b\in C^{\alpha,\kappa}_0(\reals^d\times M,\R^d)$ for some $\alpha>2+\frac{2}{p-1}+\frac{d}{p}$.
Then we have the same conclusion as Theorem \ref{thm:axioma}(i,ii,iii).
\end{thm}
We now show how Theorem~\ref{thm:axioma} follows from Theorem~\ref{thm:abstract}.

\begin{proof}[Proof of Theorem~\ref{thm:axioma}]
Assumptions~\ref{ass:clt} and~\ref{ass:moments} (with $p=\infty$) are valid for hyperbolic basic sets by~\cite[Theorem~1.1]{kelly14} and~\cite[Proposition~7.5, Remark~7.7]{kelly14} respectively.
Hence Theorem~\ref{thm:axioma}(i,ii,iii) follows from Theorem~\ref{thm:abstract}.
Moreover, Theorem~\ref{thm:axioma}(iv) follows from~\cite[Theorem~1.1(b)]{kelly14}. 
\end{proof}

\begin{rmk} \label{rmk-KM}
In~\cite{kelly14}, we considered the special case where $b(x,y)=h(x)v(y)$ is a product (for some $v:M\to\R^e$ and $h:\R^d\to \R^{d\times e}$) under less stringent regularity conditions on $b$.   It is easy to check that when $b$ is a product, the method in this paper applies provided $b\in C^{\alpha,\kappa}$ for some
$\alpha>2+2/(p-1)$ recovering the results of~\cite{kelly14}.
The only place where the additional regularity is required for general $b$
is in the tightness
estimates in Section~\ref{s:conv_to_RDE} below.
\end{rmk}

\begin{rmk} \label{rmk-B}
A general formula for the bilinear operator $\auto$ in the case of (not necessarily mixing) Axiom~A flows can be obtained by considering the associated suspension flow.   We recall the basic definitions; further details can be found in~\cite{kelly14} and references therein.

Suppose that $f:\Lambda\to\Lambda$ is a map with ergodic invariant probability measure $\mu$.
Let $r:\Lambda\to\R^+$ be an integrable roof function with $\bar r=\int_\Lambda r\,d\mu$.   
Define the suspension
$\Lambda^r=\{(x,u)\in\Lambda\times\R:0\le u\le r(x)\}/\sim$ where
$(x,r(x))\sim (fx,0)$,  Define the suspension flow $\phi_t(x,u)=(x,u+t)$ computed
modulo identifications.   The measure $\mu_\Lambda^r=\mu_\Lambda\times{\rm Lebesgue}/\bar r$ is an ergodic invariant probability measure for $\phi_t$.

Every hyperbolic basic set $(\Omega,\mnu)$ for an Axiom~A flow can be identified with a  suspension $(\Lambda^r,\mu_\Lambda^r)$ 
with continuous and bounded roof function $r$ over a mixing subshift of finite type $(\Lambda,\mu_\Lambda)$.

Given $v\in C^\kappa_0(\Omega,\reals^m)$, we define the induced observable
$\tilde v\in L^\infty(\Lambda,\mu_\Lambda)$ by setting
$\tilde v=\int_0^r v\circ \phi_tdt$.
Similarly, we associate $\tilde w$ to $w$.
It can be shown that $\tilde v$ and $\tilde w$ have exponential decay of correlations, so in particular the series $\sum_{n=1}^\infty \int_\Lambda \tilde v\,\tilde w\circ f^n\,d\mu_\Lambda$ is absolutely convergent.

Moreover, as shown in~\cite[Corollary~8.1]{kelly14}, 
\begin{equ} \label{eq-B}
\auto(v,w)=(\bar r)^{-1}\sum_{n=1}^\infty\int_\Lambda \tilde v\,\tilde w\circ f^n\,d\mu_\Lambda+(\bar r)^{-1}\int_\Lambda S(v,w)d\mu_\Lambda\;,
\end{equ}
where 
\begin{equ}
S(v,w)(y) = \int_0^{r(y)}\bigg(  \int_0^{s} v(\phi_u y) du \bigg) w (\phi_s y) ds \;.
\end{equ}
is the iterated integral of the path $(v\circ \phi_t,w\circ \phi_t)$ along the orbit until its return to $\Lambda$. 
\end{rmk}

\begin{rmk}
There is a slightly simpler way of writing $\auto$ which gives a more geometric description of the bilinear form. We introduce the symmetric and anti-symmetric parts 
\begin{equ}
\diffu (v,w) = \frac{1}{2}\bigg(\auto(v,w) + \auto(w,v)\bigg)  \textand \anti (v,w) = \frac{1}{2}\bigg(\auto(v,w) - \auto(w,v) \bigg)\;.
\end{equ}
For the symmetric part, it follows from the product rule that 
\begin{equ}
S(v,w) + S(w,v) = \bigg(\int_0^r v\circ \phi_t dt\bigg) \bigg( \int_0^r w\circ \phi_t  dt \bigg)  = \vhat \what
\end{equ} 
and hence
\begin{equ}
\diffu (v,w) = \sum_{n=1}^\infty  \frac{1}{2\rbar}\int_\Lambda \big(\vhat\, \what\circ f^n +\what\,\vhat\circ f^n \big) d\mu_\Lambda + \frac{1}{2\rbar } \int_\Lambda \vhat\what d\mu_\Lambda\;.
\end{equ}
In particular the symmetric part of the bilinear form is completely determined by the cross correlations between induced observables. Similarly 
\begin{equ}
\anti (v,w) =  \sum_{n=1}^\infty  \frac{1}{2\rbar}\int_\Lambda \big(\vhat\, \what\circ f^n -\what\, \vhat\circ f^n\big) d\mu_\Lambda + \frac{1}{2\rbar } \int_\Lambda \big( S(v,w) - S(w,v)\big) d\mu_\Lambda\;.
\end{equ}
The advantage here is that the expression
\begin{equ}
\frac{1}{2}\big( S(v,w)(y) - S(w,v)(y)\big) 
\end{equ}
is equal (by Green's theorem) to the \emph{signed area} traced out in $\reals^2$ by the loop $(v(\phi_t y), w(\phi_t y))_{t=0}^{r(y)}$ (closed by the secant joining the endpoints). 
\end{rmk}

We have shown that Theorem~\ref{thm:axioma} holds for Anosov and Axiom A flows, with \mbox{$p = \infty$}. More generally, the conclusions remain valid for nonuniformly hyperbolic flows with Poincar\'e map modelled by a Young tower with exponential tails~\cite{young98} (including the case of H\'enon-like attractors). 
Even more generally, the conclusions remain valid 
 when the Poincar\'e map is modelled by a Young tower with subexponential tails~\cite{young99} provided that the tails decay sufficiently quickly
(the value of $p$ depends on this decay rate).  We refer to \cite[Section~10]{kelly14} for a precise statement.
In particular, this includes the classical Lorenz attractor (again
with $p=\infty$).
\par
In the remainder of this section, we describe some elementary
properties that follow immediately from the assumptions on the fast dynamics. Firstly, we show that in Assumption~\ref{ass:moments} the constant $K$ can be chosen uniformly in $v,w$. Define the incremental objects
\begin{equ}
v_{s,t} =  \int_s^t v \circ \phi_r dr \textand S_{s,t} = \int_s^t \int_s^r v\circ \phi_u w \circ \phi_r dudr\;. 
\end{equ}

\begin{prop} \label{prop:moments}  
If the fast flow satisfies Assumption \ref{ass:moments}, 
then 
 \begin{equ}
 (\BE_\mu |v_{s,t}|^{2p})^{1/(2p)} \lesssim \norm{v}_{C^\kappa} |t-s|^{1/2} \textand (\BE_\mu |S_{s,t}|^{p})^{1/p} \lesssim \norm{v}_{C^\kappa}\norm{w}_{C^\kappa} |t-s|
 \end{equ}
 for all $s,t\ge0$, $v,w\in C^\kappa_0(\Omega)$. 
\end{prop}

\begin{proof}
By stationarity it suffices to check the claim with $s =0$. Consider the family of linear operators $\{L_t:C^\kappa_0(\Omega)\to L_{2p}(\Omega),\;t>0\}$ given by $L_tv=t^{-1/2}v_t$.
Since $\norm{L_tv}_\infty\le t^{1/2}\norm{v}_\infty$ it is certainly the case that 
$L_t:C^\kappa_0(\Omega)\to L_{2p}(\Omega)$ is bounded for each $t$.
By Assumption \ref{ass:moments}, for each $v\in C^\kappa_0(\Omega)$, there
exists a constant $K=K_v$ such that $\norm{L_tv}_{2p}\le K_v$ for all $t>0$.
By the uniform boundedness principle, there is a uniform constant $K$ such that
$\norm{L_tv}_{2p}\le K\norm{v}_{C^\kappa}$ for all $v\in C^\kappa_0(\Omega)$, $t>0$.
This establishes the desired estimate for $v_t$.

The estimate for $S_t$ is proved similarly by considering the family of
bilinear operators $\{B_t:C^\kappa_0(\Omega)\times C^\kappa_0(\Omega)\to L^p(\Omega),\;t>0\}$ given by $B_t(v,w)=t^{-1}S_t$.
\end{proof}

The next result is a collection of simple facts that will be used throughout the rest of the article.  

\begin{prop} \label{prop:auto}  
If the fast flow satisfies Assumptions \ref{ass:wip} and \ref{ass:moments}, then 
\begin{itemize}
\item[(a)] The covariance of $W_v$ is given by
$ \BE W_v^i(1)W_v^j(1)=\auto(v^i,v^j)+\auto(v^j,v^i)$
for all $v\in C^\kappa_0(\Omega,\reals^m)$.
\item[(b)] $\auto(v,v)\ge0$ for all
$v\in C^\kappa_0(\Omega)$.
\item[(c)] $|\auto(v,w)|\lesssim\norm{v}_{C^\kappa}\norm{w}_{C^\kappa}$ for all
$v,w\in C^\kappa_0(\Omega)$.
\item[(d)] $(W_{v,n},\BBW_{v,n})\to_w(W_v,\BBW_v)$ as $n\to\infty$ in the supnorm topology.
\end{itemize}
\end{prop}

\begin{proof}
(a)
It follows from Assumptions~\ref{ass:clt} and~\ref{ass:moments} 
that
\[
\BE_\mu W_{v,n}^i(1) W_{v,n}^j(1)\to \BE W_v^i(1) W_v^j(1),
 \quad
\BE_\mu\BBW_{v,n}^{ij}(1)\to \BE \BBW_v^{ij}(1)= \auto(v^i,v^j),
\]
where we have used the fact that It\^o integrals have zero mean.
Taking expectations on both sides of the identity
\[
W_{v,n}^i(1) W_{v,n}^j(1)= \BBW_{v,n}^{ij}(1)+\BBW_{v,n}^{ji}(1)
\]
and letting $n\to\infty$ yields the desired result.

\noindent(b)
It follows from part~(a) that $\auto(v^i,v^i)=\frac12\BE W_v^i(1)^2\ge0$.

\noindent(c) Define $S_t^{ij}$ using the definition of $S_t$ but with $v=v^i$ and $w=w^i$.
We note that $n^{-1}S_n^{ij}=\BBW_{v,n}^{ij}(1)$ and hence by Proposition~\ref{prop:moments},
$\BE_\mu|\BBW_{v,n}^{ij}(1)|
\lesssim \norm{v^i}_{C^\kappa} \norm{v^j}_{C^\kappa}$.
By Assumptions~\ref{ass:clt} and~\ref{ass:moments}, $|\auto(v^i,v^j)|
=\lim_{n\to\infty}|\BE_\mu\BBW_{v,n}^{ij}(1)|
\lesssim \norm{v^i}_{C^\kappa} \norm{v^j}_{C^\kappa}$.

\noindent(d) Since the limiting random variable is (almost surely) continuous,  it is sufficient to prove the weak convergence result in the Skorokhod topology. But this is a simple consequence of \cite[Theorem 15.6]{billingsley99}, combined with the Assumptions \ref{ass:clt} and \ref{ass:moments}.
\end{proof}

%

Finally, we show that convergence as $n\to\infty$ of the sequence of processes $(W_{v,n},\BBW_{v,n})$ 
implies 
convergence as $\eps\to0$  of the family of processes 
\[
W^{(\eps)}_v(t)= \eps\int_0^{t\eps^{-2}} v\circ \phi_s\, ds, \quad
\BBW^{(\eps)}_v(t)=\eps^2\int_0^{t\eps^{-2}}\int_0^s v\circ\phi_r\otimes v\circ\phi_s\,dr\,ds,\;\eps>0.
\]
Before doing so, we need the following elementary lemma.
 
\begin{lemma} \label{lem-elem}
Suppose that $a:\R\to\R$ is bounded on compact sets.
Let $b>0$, $T\ge0$.
If $\lim_{\eps\to0}\eps^b a(\eps^{-1})=0$, then
$\lim_{\eps\to0}\eps^b\sup_{t\in[0,T]}|a(t\eps^{-1})|=0$.
\end{lemma}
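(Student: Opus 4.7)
The plan is to reduce the supremum on $[0,T]$ under $t\mapsto a(t\eps^{-1})$ to a supremum of $|a|$ on the expanding interval $[0,T\eps^{-1}]$, then split that interval into a compact piece (where boundedness of $a$ on compacts gives control) and a tail (where the hypothesis $\eps^b a(\eps^{-1})\to0$ gives control).

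More concretely, I would change variables via $s=t\eps^{-1}$ to rewrite
\[
\eps^b\sup_{t\in[0,T]}|a(t\eps^{-1})|=\eps^b\sup_{s\in[0,T\eps^{-1}]}|a(s)|.
\]
The hypothesis translates, with $s=\eps^{-1}$, into the statement: for every $\delta>0$ there exists $s_0>0$ such that $|a(s)|\le \delta\, s^b$ for all $s\ge s_0$. (This is the only place the hypothesis is used, and it is where one has to be mildly careful: the original hypothesis only controls $a$ along the sequence $s=\eps^{-1}$, but since $\eps$ ranges over a continuum, $s$ likewise ranges over all sufficiently large reals, so the translated inequality really does hold for all large $s$, not just along a sequence.) Choose $\eps$ small enough so that $T\eps^{-1}\ge s_0$, and split
\[
\sup_{s\in[0,T\eps^{-1}]}|a(s)|\le \sup_{s\in[0,s_0]}|a(s)|+\sup_{s\in[s_0,T\eps^{-1}]}|a(s)|.
\]

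The first supremum is a finite constant $M=M(s_0)$ by the hypothesis that $a$ is bounded on compact sets, and the second is at most $\delta(T\eps^{-1})^b=\delta T^b\eps^{-b}$ by the translated decay estimate. Multiplying by $\eps^b$ yields
\[
\eps^b\sup_{t\in[0,T]}|a(t\eps^{-1})|\le \eps^b M+\delta T^b,
\]
and letting $\eps\to0$ and then $\delta\to0$ gives the conclusion. The only real subtlety is the translation of the pointwise hypothesis $\eps^b a(\eps^{-1})\to0$ into the uniform bound $|a(s)|\le\delta s^b$ for all large $s$; this is immediate once one notes that $\eps\mapsto\eps^{-1}$ is a continuous bijection from $(0,\infty)$ to $(0,\infty)$, so the limit along $\eps\to0$ is truly the limit along $s\to\infty$, not merely along a discrete sequence.
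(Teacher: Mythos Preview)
Your proof is correct and follows essentially the same strategy as the paper's: both split according to whether $t\eps^{-1}$ lies in a fixed compact interval (where boundedness on compacts controls $a$) or beyond it (where the hypothesis, applied with $\eps'=\eps/t$, controls $a$). Your change of variables $s=t\eps^{-1}$ makes the argument slightly more transparent, but the decomposition and the use of the two hypotheses are identical to the paper's.
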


\begin{proof}
The proof is standard and included just for completeness.

Fix $\delta>0$.  Choose $\eps_0>0$ such that
$\eps^b a(\eps^{-1})<\delta/T^b$ for $\eps<\eps_0$.
Now choose $\eps_1>0$ such that
$\eps_1^b\sup_{t\le\eps_0^{-1}}|a(t)|<\delta$.

We show that $\sup_{t\in[0,T]}|\eps^b a(t\eps^{-1})|<\delta$ for all
$\eps<\eps_1$.
There are two cases.
If $\eps/t\ge\eps_0$, then
$|\eps^b a(t\eps^{-1})|\le\eps_1^b\sup_{t\le\eps_0^{-1}}|a(t)|<\delta$.
If $\eps/t<\eps_0$, then
$|\eps^b a(t\eps^{-1})|\le T^b(\eps/t)^b|a(t\eps^{-1})|<\delta$.
\end{proof}

\begin{prop} \label{prop-neps}
If Assumption~\ref{ass:clt} holds, then
$(W^{(\eps)}_v,\BBW^{(\eps)}_v)\to_w(W_v,\BBW_v)$ as $\eps\to0$ in the supnorm topology, for all $v\in C_0^\kappa(\Omega,\reals^m)$.
\end{prop}

\begin{proof}
Let $n=[\eps^{-2}]$. We have
\[
W^{(\eps)}_v(t)=\eps n^{1/2}W_{v,n}(t)+
\eps\int_{tn}^{t\eps^{-2}}v\circ \phi_s\,ds.
\]
As $\eps\to0$, $\eps n^{1/2}\to1$.  Also,
$\|\int_{tn}^{t\eps^{-2}}v\circ \phi_s\,ds\|_\infty\le t\norm{v}_\infty$ and hence $W^{(\eps)}_v-W_{v,n}\to_w0$.

Similarly,
\begin{align*}
\BBW^{(\eps)}_v(t) & =\eps^2 n\BBW_{v,n}(t)+
\eps^2\int_{tn}^{t\eps^{-2}}\int_0^s v\circ \phi_r\otimes v\circ\phi_s\,dr\,ds=
\eps^2 n\BBW_{v,n}(t)+\eps^2 \Aeps(t),
\end{align*}
where
\begin{align*}
\Aeps(t) & = \int_{tn}^{t\eps^{-2}}\int_{tn}^s v\circ \phi_r\otimes v\circ\phi_s\,dr\,ds+
\int_{tn}^{t\eps^{-2}}\int_0^{tn} v\circ \phi_r\otimes v\circ\phi_s\,dr\,ds .
\end{align*}
Now $|\Aeps(t)|\le t^2|v|_\infty^2+t|v|_\infty |v_{tn}|$.
By the ergodic theorem, $\eps^2 v_{\eps^{-2}}\to0$ almost everywhere, and hence by Lemma~\ref{lem-elem} 
$\sup_{t\in[0,T]}\eps^2 |v_{t\eps^{-2}}|\to0$ almost everywhere.
It follows that
$\sup_{t\in[0,T]}\eps^2 |\Aeps(t)|\to0$ almost everywhere, and so
$\BBW^{(\eps)}_v-\BBW_{v,n}\to_w0$.

Altogether, we obtain that
$(W^{(\eps)}_v,\BBW^{(\eps)}_v)-
(W_{v,n},\BBW_{v,n})\to_w 0$ as required.
\end{proof}

\section{Some rough path theory}\label{s:rough}
In this section we formalize some of the ideas from rough path theory put forward in Section~\ref{s:intro}: namely, that one can build a continuous map from ``noise space'' to ``solution space''. This map is constructed using rough path theory \cite{lyons98}. The formulation of rough path theory that we employ closely follows the recent book \cite{frizhairer13}. Before going into the theory, we list some preliminary facts concerning tensor products of Banach spaces.

\subsection{Tensor products of Banach spaces}
Let $\CA,\CB$ be Banach spaces (over $\reals$). The \emph{algebraic} tensor product space $\CA \otimes_{a} \CB$ is defined as the vector space
\begin{equ}
\CA \otimes_{a} \CB = {\rm span }\{ x\otimes y \;|\; x \in \CA \;,\; y\in \CB  \} \;.
\end{equ}
That is, $\CA \otimes_a \CB $ is the space of finite sums $\sum_{n} x_n \otimes y_n $ for $x_n \in \CA$, $y_n \in \CB$. For $f \in L(\CA,\reals)$, $g \in L(\CB,\reals)$ we define a linear functional $f\otimes g : \CA \otimes_a \CB \to \reals$ by 
\begin{equ}\label{e:dual_tensor}
(f\otimes g) \sum_{n} x_n \otimes y_n = \sum_{n} f(x_n) g(y_n)\;.
\end{equ}
A norm $\norm{\cdot}_{\CA \otimes \CB} : \CA \otimes_a \CB \to \reals_+$ is called \emph{admissible} if 
\begin{equ}\label{e:admissible}
\norm{x \otimes y}_{\CA \otimes \CB} = \norm{x}_{\CA} \norm{y}_{\CB} \textand \norm{f\otimes g}_{L(\CA \otimes_a \CB,\reals)} = \norm{f}_{L(\CA,\reals)}\norm{g}_{L(\CB,\reals)}
\end{equ}
for all $x \in \CA , y \in \CB$ and all $f\in L(\CA,\reals)$ and $g \in L(\CB,\reals)$. By \cite[Lemma 1.4]{light85}, to check admissibility it is sufficient to check \eqref{e:admissible} with $=$ replaced by $\leq$. 
\par
For an admissible norm $\norm{\cdot}_{\CA \otimes \CB}$ we define the tensor product space $\CA \otimes \CB$ as the completion of $\CA \otimes_a \CB$ under the norm $\norm{\cdot}_{\CA \otimes \CB}$. Hence $(\CA \otimes \CB , \norm{\cdot}_{\CA \otimes \CB})$ is a Banach space.
All tensor products we consider will be constructed using an admissible norm.
\par
The admissibility requirement guarantees that $f \otimes g \in L(\CA \otimes_a \CB , \reals)$ and since $\CA \otimes_a \CB$ is (by definition) dense in $\CA \otimes \CB$, $f \otimes g$ extends uniquely to an element of $L(\CA \otimes \CB , \reals)$.

\subsection{Spaces of rough paths}\label{s:rough_results}
In this subsection, we show how to build a ``noise space'' of Banach space valued paths as 
mentioned in Section~\ref{sec:strategy}.
Recall that this should include smooth paths and also Brownian-like paths.
It turns out that it is necessary also to add extra structure to the set
of paths.  The resulting space is called the space of {\em rough paths}.
\par
Let $\CA$ be a Banach space. For $\beta \in (\frac1 2,1)$, we define $\mathcal{C}^\beta=\CC^\beta(\CA)$ to be the set of all continuous paths $V : [0,T] \to \CA$ with $V(0) = 0$ and 
\begin{equ}
|V|_{C^\beta} =  \sup_{s,t} \frac{|V(s,t)|}{|t-s|^\beta} < \infty\;,
\end{equ} 
where $V(s,t) = V(t) - V(s)$. The pair $(\CC^\beta,|\cdot|_{C^\beta})$ is a Banach space.
\par
Let $\CB$ be a Banach space with tensor product space $\CB \otimes \CB$. For $\gamma \in (\frac13,\frac12]$, the space $\CC^\gamma=\mathcal{C}^\gamma(\CB)$ is defined to be the set of all continuous paths $(W,\BBW)~:~[0,T]~\to~\CB~\times~(\CB~\otimes~\CB)$ with $(W(0),\BBW(0)) =   0$ and such that 
\begin{equ}
 \sup_{s,t} \frac{\norm{W(s,t)}_\CB}{|t-s|^\gamma } < \infty \textand  \sup_{s,t} \frac{\norm{\BBW(s,t)}_{\CB \otimes \CB}}{|t-s|^{2\gamma} } < \infty\;,
\end{equ}
where $W(s,t) = W(t) - W(s)$ and $\BBW(s,t) = \BBW(t) - \BBW(s) - W(s)\otimes W(s,t) $.
The set $\mathcal{C}^\gamma$ is known as the set of $\gamma$-rough paths and forms a complete metric space under the metric
\begin{equ}
\rho_\gamma((W_1,\BBW_1), (W_2,\BBW_2)) =  \sup_{s,t} \frac{\norm{W_1(s,t) - W_2(s,t)}_\CB}{|t-s|^\gamma } +  \sup_{s,t} \frac{\norm{\BBW_1(s,t) - \BBW_2(s,t)}_{\CB \otimes \CB}}{|t-s|^{2\gamma} } \;.
\end{equ}
We also make use of the norm-like object
\begin{equ}
\normy{(W,\BBW)}_{\CC^\gamma} = \sup_{s,t} \frac{\norm{W(s,t)}_\CB}{|t-s|^\gamma } +  \sup_{s,t} \frac{\norm{\BBW(s,t)}^{1/2}_{\CB \otimes \CB}}{|t-s|^{\gamma} } \;,
\end{equ}
which shows up in some estimates, but does not play any role in defining the topology. 
\par
Finally, we define the set of $(\beta,\gamma)$-rough paths $\CC^{\beta,\gamma}= \mathcal{C}^\beta(\CA)\times \mathcal{C}^\gamma(\CB)$; this is a complete metric space with the product metric.

\begin{rmk}
One should think of $\CC^{\beta,\gamma}$ as the ``noise space''. This space clearly contains irregular Brownian paths, in addition to smooth paths. The pair $\BW=(W,\BBW)$, when combined with the rough path topology, is what we mean by ``extra structure''. 
We view $\BBW(t)$ 
as a candidate for the integral $\int_0^t W \otimes dW$ and $\BBW(s,t)$ as a candidate for $\int_s^t W(s,r)\otimes dW(r)$. Note that since $W$ is only H\"older continuous, there may be many candidates for the integral $\BBW$; hence it must be specified. 
\end{rmk}

Next, we define a subspace $\CC^\gamma_g \subset \CC^{\gamma}$ known as the geometric rough paths. Let $W~:~[0,T] \to \CB$ be a smooth (piecewise $C^1$) path and let $\BBW~:~[0,T] \to \CB\otimes\CB$ be the path of Riemann integrals
\begin{equ}\label{e:iterated}
\BBW(t) = \int_0^t W \otimes dW
= \int_0^t W \otimes \dot W\,dt\;.
\end{equ}
The $\gamma$-geometric rough paths $\CC_g^{\gamma}$ are defined as the closure of the set of all such smooth pairs $(W,\BBW)$ in $\CC^{\gamma}$. 

\begin{rmk}\label{rmk:tensor1}
The smoothness of $W$ combined with 
the admissibility of the tensor product space ensure that 
$t \mapsto W(t)\otimes \dot{W}(t)$  is a piecewise continuous map and hence Riemann integrable. 
\end{rmk}
\subsection{Rough differential equations}

Suppose that $V,W$ are smooth and that $F: \reals^d \to L(\CA,\reals^d)$, $H  : \reals^d \to L(\CB,\reals^d)$. Under suitable regularity assumptions on $F,H$, the ODE
\begin{equ}
X(t) = \xi +  \int_0^t F(X)dV +  \int_0^t H(X)dW
\end{equ}
has a unique solution $X$. We call the map $\Phi : (V,W) \mapsto X$ the solution map. 
In this subsection, we show how the map $\Phi$ extends to the space of rough 
paths $\CC^{\beta,\gamma}$.
\par
For the moment, we suppose that $F$ is $C^1$ and $H$ is $C^2$.  Recall that
$\beta>\frac12$ and $\gamma>\frac13$ and suppose in addition that $\beta + \gamma > 1$.
For $(V,W,\BBW)\in \CC^{\beta,\gamma}$ there is a class of paths $X: [0,T] \to \reals^d$  known as \emph{controlled rough paths} for which one can define the integrals
\begin{equ}
\int_0^t F(X) dV \textand \int_0^t H(X) d\BW\;,
\end{equ}
with the shorthand $\BW = (W,\BBW)$. We call $X$ a controlled rough path if 
$X(s,t)=X(t)-X(s)$ has the form
\begin{equ}
X^i(s,t) = X'_i (s) W(s,t) + O(|t-s|^{2\gamma})
\end{equ} 
for all $i=1,\dots,d$ and $0 \leq s \leq t \leq T$, where $X'_i \in C^\gamma ([0,T] , L(\CB,\reals))$.
For a thorough treatment of controlled rough paths and their use in defining the above integrals, see \cite[Section 4]{frizhairer13}.
\par
Since $\beta+\gamma>1$, the $dV$ integral is well-defined as a Young integral \cite{young36}, namely
\begin{equ}
\int_0^t F(X) dV = \lim_{\Delta \to 0} \sum_{[t_n,t_{n+1}] \in \Delta}  F(X(t_{n}))V(t_{n},t_{n+1})
\end{equ}
where $\Delta = \{ [t_n,t_{n+1}] : 0\leq n \leq N-1 \}$ denotes partitions of $[0,t]$. The integral is defined pathwise, for each fixed $V$.  
\par
The $d\BW$ integral is defined as a compensated Riemann sum
\begin{equ}\label{e:dW_int}
\int_0^t H(X) d\BW =  \lim_{\Delta \to 0} S_{\Delta}
\end{equ}
where
\begin{equ} \label{e:dW_intS}
S^i_{\Delta} = \sum_{[t_n,t_{n+1}] \in \Delta}  H^i(X(t_{n}))W(t_{n},t_{n+1}) +  \sum_{k=1}^d\big( X'_k(t_{n}) \otimes \del_k H^i(X(t_{n})) \big) \BBW(t_{n},t_{n+1}) 
\end{equ}
with $\Delta$ as above. The dual tensor product $X'_i(t_{n}) \otimes \del_k H^i(X(t_{n})) $ is defined as in~\eqref{e:dual_tensor}. Note that the integral is defined pathwise, for each fixed path $(W,\BBW)$.  
In the special case where $W$ is a Brownian path and $\BBW$ is the iterated It\^o integral, $d\BW$ becomes It\^o integration. 

\par
A controlled rough path $X$ is said to solve the RDE $dX = F(X)dV + H(X)d\BW$ with initial condition $X(0)=\xi$ if it solves the integral equation
\begin{equ}
X(t) = \xi +  \int_0^t F(X)dV + \int_0^t H(X) d\BW\;,
\end{equ}
for all $t \in [0,T]$. 
For a thorough treatment of rough differential equations, see \cite[Section~8]{frizhairer13}. 
In particular, we have the following basic result which includes existence, uniqueness and continuous dependence of solutions to RDEs.
%
%

\begin{thm}\label{thm:rde}
Let $\gamma \in (\frac13,\frac12]$ and 
$\gamma'\in (\frac13,\gamma)$.  Suppose that
$F \in C^{1+\delta'}(\reals^d , L(\CA,\reals^d))$, 
$H \in C^{\frac{1}{\gamma}+\delta}(\reals^d , L(\CB,\reals^d))$, where $\delta,\delta'>0$,.
Then there exists $\beta_*=\beta_*(\gamma,\gamma',\delta') \in (\frac12,1)$ such that 
the solution map
$\Phi : \CC^{\beta,\gamma} \to C^{\gamma'}([0,T],\reals^d)$ given by
\[
\Phi(V,W,\BBW)=X
\]
is continuous for $\beta \in (\beta_*,1)$,
\end{thm}

The solution map $\Phi$ is a genuine extension of the classical solution map in the sense that, if $V,W$ are smooth paths and $\BBW$ is the iterated integral above $W$ (as in \eqref{e:iterated}) then $X = \Phi (V,W,\BBW)$ agrees with the solution to the classical ODE $dX = F(X)dV + H(X)dW$ with the same initial condition. 
\par
%
\begin{proof}[Proof of Theorem \ref{thm:rde}]
This is (a slight modification of) a standard result in rough path theory. 
Indeed when $V = 0$, 
it follows from \cite[Theorem 8.5]{frizhairer13}. 
The extension to nontrivial $V$ is a simple exercise in controlled rough paths.
\par
To apply rough path theory in Banach spaces one typically assumes an embedding 
\begin{equ}
L(\CB , L(\CB, \reals)) \hookrightarrow L(\CB \otimes \CB ,\reals)\;.
\end{equ}
See for instance \cite[Section 1.5]{frizhairer13}. We do not assume such an embedding. However, since we only interested in results concerning RDEs (and not general controlled rough paths) it is sufficient to assume the tensor product norm used to construct $\CB \otimes \CB$ is admissible. In particular, the only elements of $L(\CB , L(\CB, \reals))$ required to satisfy the above embedding are of product form. That is, they are described by $(f,g)x = f(x) g$ for all $x \in \CB$, with $f \in L(\CB,\reals)$ and $g \in L(\CB,\reals)$. Specifically, they are described by $(f,g) = (X'_k(t),\del_k H(X(t)))$ where $(X, X')$ is the controlled rough path candidate for the solution to the RDE. But clearly we can always perform such an embedding, by the identification $(f,g) \sim f \otimes g$ and by admissibility we have that  $f\otimes g \in L(\CB \otimes \CB , \reals)$ as required. 
\end{proof}

In the remainder of the article we will use the following result which is an immediate consequence of Theorem \ref{thm:rde}.

\begin{cor}\label{cor:rde}
Suppose that $V_\eps$, $W_\eps$ are smooth paths, 
and that $\BBW_\eps$ is the iterated integral of $W_\eps$ (as in \eqref{e:iterated}).
Let $\gamma \in (\frac13,\frac12]$.
Suppose that
$F \in C^{1+}(\reals^d , L(\CA,\reals^d))$ and $H \in C^{\frac{1}{\gamma}+}(\reals^d , L(\CB,\reals^d))$, 
and that
$X_\eps$ solves the ODE
\begin{equ}\label{e:rde_ode}
dX_\eps = F(X_\eps )dV_\eps + H(X_\eps) dW_\eps \quad
X_\eps(0) = \xi\;. 
\end{equ}
\par
If $(V_\eps,W_\eps,\BBW_\eps)\to_w (V,W,\BBW)$ in the $\CC^{\beta,\gamma}$ topology for all $\beta\in(\frac12,1)$, then 
$X_\eps \to_w X$ in the supnorm topology, where $X$ solves the RDE
\begin{equ}\label{e:rde_phi}
dX = F(X) dV + H(X)d\BW  \quad , \quad X(0) = \xi\;,
\end{equ}
with $\BW = (W,\BBW)$.
\end{cor}


Next, we list some properties of solutions to RDEs. Since these properties are completely standard, no proof will be given.

\begin{prop} \label{prop-X'soln}
When $X$ solves the RDE \eqref{e:rde_phi} we can always take $X_k'(\cdot)=H^k(X(\cdot))$ in the definition of the $d\BW$ integral in~\eqref{e:dW_int},~\eqref{e:dW_intS}.
\end{prop}

\begin{prop} \label{prop:CR}
Assume the set up of Theorem~\ref{thm:rde} 
and suppose moreover that 
$\BW = (W,\BBW) \in \CC^\gamma_g$.  Then the classical chain rule 
\begin{equ}
\vphi(X(t)) = \vphi(X(s)) + \sum_{k=1}^d \int_s^t \del_k \vphi(X) F^k (X)dV + \int_s^t \del_k \vphi(X) H^k (X)d\BW\;,
\end{equ}
is valid for any smooth $\vphi : \reals^d \to \reals$. 
\end{prop}
This result is an immediate consequence of the fact that the integrals are limits of smooth integrals, for which the chain rule holds. (The result fails for general rough paths $\BW \in \CC^\gamma$.)

The last result is an extension of the standard Kolmogorov continuity criterion to (smooth) rough paths, taking values in $\reals$. A proof can be found in \cite[Corollary 4]{gubinelli04}. The one dimensional case turns out to be sufficient for our needs, even in the Banach space setting.

\begin{lemma}\label{lem:kolm}
Let $T>0$ and let $W_\eps,\Wtilde_\eps : [0,T]\to \reals $ be smooth paths.
Define  
$I_\eps(s,t) = \int_s^t W_\eps(s,r) d\Wtilde_\eps(r)$.
Let $p>1$ and $\gamma\in (0,\frac12 - \frac{1}{2p})$,
and suppose that $M$, $\tilde M$ are constants.

\begin{itemize}
\item[(a)] If 
$(\BE |W_\eps(s,t)|^{2p})^{1/(2p)} \leq  M|t-s|^{1/2}$ 
for all $\eps>0$, $s,t\in[0,T]$, then 
there is a constant $C$ depending only on $T$, $d$, $p$, $\gamma$ such that
\begin{equ} 
\bigg(\BE \bigg( \sup_{s,t \in [0,T]} \frac{|W_\eps(s,t)|}{|t-s|^{\gamma}}\bigg)^{2p}\bigg)^{1/(2p)} \leq CM 
\;,\quad\text{for all $\eps>0$}\;.
\end{equ}
\item[(b)]
If
\begin{equ}
(\BE |W_\eps(s,t)|^{2p})^{1/(2p)} \leq M|t-s|^{1/2} \textand (\BE |\Wtilde_\eps(s,t)|^{2p})^{1/(2p)} \leq \tilde M|t-s|^{1/2}
\end{equ}
and 
\begin{equ}
(\BE |I_\eps(s,t)|^{p})^{1/p} \leq  M\tilde M|t-s|
\end{equ}
for all $\eps>0$, $s,t\in[0,T]$, then 
there is a constant $C$ depending only on $T$, $d$, $p$, $\gamma$ such that
\begin{equ}
 \bigg(\BE \bigg( \sup_{s,t \in [0,T]} \frac{|I_\eps(s,t)|}{|t-s|^{2\gamma}}\bigg)^{p}\bigg)^{1/p}\leq CM\tilde M
\;,\quad\text{for all $\eps>0$}\;.
\end{equ}
\end{itemize}
\end{lemma}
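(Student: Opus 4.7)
The plan is to apply the Garsia--Rodemich--Rumsey (GRR) inequality pathwise and then take expectations. I will use the standard form: for any continuous $f:[0,T]\to\R$ and any increasing continuous $\Psi,\rho:[0,\infty)\to[0,\infty)$ with $\Psi(0)=\rho(0)=0$, if
\[
B \;=\; \int_0^T\!\!\int_0^T \Psi\!\left(\frac{|f(t)-f(s)|}{\rho(|t-s|)}\right)ds\,dt\;,
\]
then $|f(t)-f(s)|\le 8\int_0^{|t-s|}\Psi^{-1}(4B/u^2)\,d\rho(u)$ for all $s,t\in[0,T]$. In each part I will choose $\Psi$ and $\rho$ so that Fubini and the moment hypothesis force $\BE B<\infty$ exactly in the stated range of $\gamma$, and so that the deterministic GRR bound collapses to the desired H\"older modulus.

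For part (a) I will take $\Psi(x)=x^{2p}$ and $\rho(u)=u^{\gamma+1/p}$. Fubini and the hypothesis give
\[
\BE B \;\le\; M^{2p}\int_0^T\!\!\int_0^T |t-s|^{\,p-2p\gamma-2}\,ds\,dt\;,
\]
which is finite precisely when $p-2p\gamma-2>-1$, i.e.\ when $\gamma<\tfrac12-\tfrac{1}{2p}$, the stated range. Substituting into GRR produces $|W_\eps(s,t)|\le C_{\gamma,p}\,B^{1/(2p)}|t-s|^\gamma$, the constant $C_{\gamma,p}$ coming from the convergent integral $\int_0^{|t-s|}u^{\gamma-1}du$. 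Dividing by $|t-s|^\gamma$, taking $\sup_{s,t}$ and then the $2p$-th moment of both sides yields (a). Part (b) is parallel: with $\Psi(x)=x^p$ and $\rho(u)=u^{2\gamma+2/p}$, the moment bound on $I_\eps$ together with Fubini produces the same integral $M^p\tilde M^p\int\!\int|t-s|^{\,p-2p\gamma-2}\,ds\,dt$, finite under the same hypothesis on $\gamma$. GRR now gives the pathwise estimate $|I_\eps(s,t)|\le C'_{\gamma,p}\,B^{1/p}|t-s|^{2\gamma}$, and the same $\sup$-then-$p$-th-moment step concludes.

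There is no real conceptual obstacle here: GRR is the natural hammer, and matching $\rho$ to the target H\"older exponent is a short bookkeeping exercise. The one point deserving attention is the balance between the two parts: the improved moment scaling for $I_\eps$ (time exponent $1$ versus $1/2$ for $W_\eps$) is exactly offset by the fact that $I_\eps$ should inherit H\"older regularity $2\gamma$ rather than $\gamma$, so both parts land on the identical threshold $\gamma<\tfrac12-\tfrac{1}{2p}$. Alternatively one could simply invoke \cite[Corollary~4]{gubinelli04}, as the paper does, and skip the GRR calculation entirely.
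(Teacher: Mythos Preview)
Your argument for part (a) is correct and is essentially the standard GRR proof of the Kolmogorov criterion. The paper itself gives no proof here, simply citing \cite[Corollary~4]{gubinelli04}, so on part (a) you have supplied more detail than the paper.

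There is, however, a genuine gap in your treatment of part (b). The GRR inequality you quote applies to a one-parameter function $f$ and its increments $f(t)-f(s)$. But $I_\eps(s,t)=\int_s^t W_\eps(s,r)\,d\Wtilde_\eps(r)$ is \emph{not} of the form $f(t)-f(s)$: it fails additivity and instead satisfies Chen's relation
\[
I_\eps(s,t)=I_\eps(s,u)+I_\eps(u,t)+W_\eps(s,u)\,\Wtilde_\eps(u,t)\,.
\]
So the line ``GRR now gives the pathwise estimate $|I_\eps(s,t)|\le C'_{\gamma,p}B^{1/p}|t-s|^{2\gamma}$'' is not justified: the classical GRR does not see the two-parameter object $I_\eps(s,t)$. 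This is not a cosmetic issue---it is precisely why the hypotheses of part (b) include the first-level bounds on $W_\eps$ and $\Wtilde_\eps$, not just the bound on $I_\eps$. In any correct argument (whether via a rough-path Kolmogorov criterion as in Friz--Hairer, or via Gubinelli's sewing-type estimate) those first-level bounds are used to control the Chen defect $W_\eps(s,u)\Wtilde_\eps(u,t)$ along a dyadic or chaining decomposition; your proposal never uses them, which is a symptom of the gap.

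The fix is exactly the alternative you mention at the end: invoke \cite[Corollary~4]{gubinelli04} (or equivalently the rough-path Kolmogorov criterion), which is what the paper does. If you want to write out a self-contained argument, you need a two-level chaining: first establish the pathwise $\gamma$-H\"older bound on $W_\eps$ and $\Wtilde_\eps$ via GRR (your part (a)), then run a dyadic argument on $I_\eps$ in which the Chen correction at each scale is controlled by the product of those H\"older norms.
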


\section{The localized convergence result}\label{s:convergence}

In this section, we state the \emph{localized} version of Theorem \ref{thm:abstract}. 
\par
\begin{thm}\label{thm:fastslow_besov}
Suppose that Assumptions \ref{ass:clt} and \ref{ass:moments} hold with some $p \in (3,\infty]$ and $\kappa>0$. 
 Suppose that 
 $a\in C^{1+,0}(\reals^d\times M,\R^d)$ and $b\in C^{\alpha,\kappa}_0(\reals^d\times M,\R^d)$ for some $\alpha~>~2~+~\frac{2}{p-1}~+~\frac{d}{p}$.
Moreover, suppose that $a,b$ have compact support in the sense that there exists $E >0 $ such that $a(x,y) = b(x,y) = 0$ for any $|x| > E$ and $y \in M$. 
%
%
%
Then the conclusions from Theorem~\ref{thm:abstract} hold. 
\end{thm}

The proof is split into several steps:
\begin{enumerate}
\item In 
the remainder of this section, we reformulate $\xeps$ into a rough path framework 
and show that $\xeps$ solves a ODE of the form \eqref{e:rde_ode}. 
\item In Section \ref{s:conv_to_RDE}, we use the theory from Section \ref{s:rough} to show that $x_\eps \to_w X$ where $X$ is defined by an RDE of the form \eqref{e:rde_phi}.
\item In Section \ref{s:rde_ito}, we show that the RDE in step~2 can be re-written as the desired It\^o SDE.  
\end{enumerate}
The proof of Theorem \ref{thm:fastslow_besov}, which is a simple combination of the above facts, can be found at the end of Section \ref{s:rde_ito}. 
Then in Section \ref{s:localization}, we show how Theorem~\ref{thm:abstract} follows from  Theorem~\ref{thm:fastslow_besov}.

\subsection{The rough path reformulation of the fast-slow system}\label{s:formulation}
We define $C^\theta(\R^d,\R^d)$ to be the vector space of continuous functions $u:\R^d\to\R^d$ with components $u^1,\dots,u^d\in C^\theta(\R^d)$.   This is a 
Banach space with norm 
$\norm{u}_{C^\theta}=\sum_{i=1}^d \norm{u^i}_{C^\theta}$.

Since $\alpha>2+ 2/(p-1)+d/p$, we can choose $\theta>2+2/(p-1)$ such that
$\alpha>\theta+d/p$.
For the reformulation described in step~1 above, we take $\CA$ and $\CB$ to be the Holder spaces
$\CA = C^{1+}(\R^d,\R^d)$, $\CB = C^\theta(\R^d,\R^d)$. 
For $\eps>0$, we define the smooth paths
\begin{equ}
V_\eps(t) =   \int_0^t a(\cdot,\yeps(r))dr  \quad \text{and} \quad W_\eps(t) = \eps^{-1} \int_0^t b(\cdot,\yeps(r))dr, \quad t\in[0,T]\;.
\end{equ}

\begin{prop} \label{prop:VW}
If $a$ and $b$ are as in Theorem~\ref{thm:fastslow_besov}, then $V_\eps$ and $W_\eps$
take values in $\CA$ and $\CB$ respectively for each $\eps>0$, $t\in[0,T]$.
\end{prop}

\begin{proof}
By definition,
$\norm{W_\eps(t)}_{\CB} =\sum_{i=1}^d \| \eps^{-1}\int_0^t b^i(\cdot,\yeps(r))dr \|_{C^\theta}$.  
But
\begin{equ}
\sup_x\Big| \int_0^t D_x^k b^i(\cdot,\yeps(r))dr\Big| \leq  t \sup_x \sup_{y\in \Omega} |D_x^k b^i(x,y)|
\end{equ}
and similarly
\begin{equ}
\sup_{x,x'}\frac{| \int_0^t D_x^k  b^i(x,\yeps(r))dr - \int_0^t D_x^k  b^i(x',\yeps(r))dr|}{|x-x'|^{\floor{\theta} - \theta}} \leq t \sup_{x,x'} \sup_{y\in \Omega} \frac{|  D_x^k  b^i(x,y) - D_x^k  b^i(x',y)|}{|x-x'|^{\floor{\theta} - \theta}}\;.
\end{equ}
By definition of the Holder norm, it follows that 
\begin{equ}
\norm{W_\eps(t)}_{\CB} \leq \eps^{-1} t  \sum_{i=1}^d \sup_{y\in \Omega}\norm{b^i(\cdot,y)}_{C^\theta}=\eps^{-1}t\norm{b}_{C^{\theta,0}}
\end{equ}
which is finite by the assumption on $b$.
Similarly,
$\norm{V_\eps(t)}_{\CA} \leq  t \norm{a}_{C^{1+,0}} <\infty$.
\end{proof}

%
%

For $x\in\R^d$, we define the multidimensional Dirac distribution operator 
$H:\R^d\to L(C^\theta(\R^d,\R^d),\R^d)$ by setting $H(x)(u)=u(x)$.  
It is easily shown that 
\begin{prop} \label{prop:dirac}
$H \in C^\theta (\reals^d , L(C^\theta(\R^d,\R^d),\reals^d))$ for 
all $\theta\ge0$. 
\end{prop}
\par
In this way, we obtain operators 
$F:\R^d\to L(\CA,\R^d)$ and $H:\R^d\to L(\CB,\R^d)$.
The following result states that the above definitions are sufficient to reformulate \eqref{e:fastslow} in the rough path framework of Corollary \ref{cor:rde}.

\begin{lemma}\label{lem:xeps_rde}
Suppose that $a$ and $b$ are as in Theorem~\ref{thm:fastslow_besov} and
define $F,H,V_\eps,W_\eps$ as above. 
Then the solution $\xeps$ to the ODE \eqref{e:fastslow} satisfies the ODE
\begin{equ}\label{e:inceps}
d\xeps = F(\xeps) d\Veps + H(\xeps) d\Weps \quad , \quad \xeps(0) = \xi\;.
\end{equ}
Moreover, $F \in C^{1+}(\reals^d ; L(\CA,\reals^d))$ and $H \in C^{\theta}(\reals^d ; L(\CB,\reals^d))$ where $\theta>2+\frac{2}{p-1}$. 
\end{lemma}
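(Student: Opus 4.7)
The plan is to handle two essentially independent tasks: first, verify that the abstract Banach-valued ODE \eqref{e:inceps} is nothing but a reparametrisation of the classical ODE \eqref{e:fastslow} for each fixed $\eps>0$; and second, read off the Hölder regularity of $F$ and $H$ directly from Proposition \ref{prop:dirac}.

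For the first task, by Proposition \ref{prop:VW} the paths $\Veps$ and $\Weps$ take values in $\CA$ and $\CB$ respectively. Since $\yeps$ is continuous in $r$ and $a,b$ are bounded continuous in $y$ uniformly in $x$, the integrands $r\mapsto a(\cdot,\yeps(r))$ and $r\mapsto b(\cdot,\yeps(r))$ are continuous in the norms of $\CA$ and $\CB$. Hence by the fundamental theorem of calculus $\Veps,\Weps$ are $C^1$ in $t$ with Banach-valued derivatives $\dot\Veps(t)=a(\cdot,\yeps(t))\in\CA$ and $\dot\Weps(t)=\eps^{-1}b(\cdot,\yeps(t))\in\CB$. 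Since these driving paths are $C^1$, the integrals in \eqref{e:inceps} reduce to classical Riemann integrals of operator-valued integrands paired with pointwise derivatives. Applying the definitions $F(x)u=u(x)$ and $H(x)u=u(x)$ gives
\begin{equ}
\int_0^t F(\xeps(s))\dot\Veps(s)\,ds+\int_0^t H(\xeps(s))\dot\Weps(s)\,ds = \int_0^t\bigl(a(\xeps,\yeps)+\eps^{-1}b(\xeps,\yeps)\bigr)\,ds,
\end{equ}
so that \eqref{e:inceps}, in integral form, coincides with the integral form of \eqref{e:fastslow}.

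For the second task, recall from the start of the section that $\CA=C^\alpha(\reals^d,\reals^d)$ for some $\alpha>1$ and $\CB=C^\theta(\reals^d,\reals^d)$ with $\theta>2+2/(p-1)$ chosen so that $\alpha>\theta+d/p$. Proposition \ref{prop:dirac} asserts that the Dirac-evaluation map $x\mapsto\delta_x$ belongs to $C^\sigma(\reals^d,L(C^\sigma(\reals^d,\reals^d),\reals^d))$ for every $\sigma\ge0$. Specialising to $\sigma=\alpha$ gives $F\in C^\alpha(\reals^d,L(\CA,\reals^d))$, hence $F\in C^{1+}(\reals^d,L(\CA,\reals^d))$; specialising to $\sigma=\theta$ gives $H\in C^\theta(\reals^d,L(\CB,\reals^d))$, as required.

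There is no serious obstacle here, since the lemma is essentially a translation step setting up the rough-path language. The only mild subtlety is bookkeeping of the several Hölder exponents ($\alpha$, $\theta$, $\kappa$) and confirming that the choices $\theta>2+2/(p-1)$ and $\alpha>\theta+d/p$ made at the start of the section are exactly what is needed to invoke Corollary \ref{cor:rde} in the next section.
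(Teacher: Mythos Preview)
Your proof is correct and takes essentially the same approach as the paper: identify the Banach-valued derivatives $\dot V_\eps(t)=a(\cdot,y_\eps(t))$ and $\dot W_\eps(t)=\eps^{-1}b(\cdot,y_\eps(t))$, apply the evaluation maps $F,H$ pointwise, and read off the regularity of $F$ and $H$ directly from Proposition~\ref{prop:dirac} together with the choice of $\CA,\CB$. One notational caution: you use the symbol $\alpha$ both for the exponent defining $\CA$ and for the regularity exponent of $b$ in the constraint $\alpha>\theta+d/p$; in the paper these are distinct quantities (the paper writes $\CA=C^{1+}$, inherited from $a\in C^{1+,0}$, whereas the $\alpha$ in $\alpha>\theta+d/p$ is the $x$-regularity of $b$).
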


\begin{proof}
The regularity of $F$ and $H$ follows immediately from Proposition~\ref{prop:dirac} and the choice of $\CA,\CB$.
%

For each fixed $y$, the function $x\mapsto a(x,y)$ is by assumption in $\CA$
and so the operation $F(x)a(\cdot,y)=a(x,y)$ is well-defined. 
Hence for fixed $x,t$,
\begin{equ}
F(x) \frac{dV_\eps(t)}{dt} =  F(x) a(\cdot,\yeps(t)) = a(x,\yeps(t))\;.
\end{equ}
Similarly,
$H(x) \frac{dW_\eps(t)}{dt} = \eps^{-1} b(x,\yeps(t))$.
It follows that 
\begin{equ}
\frac{d\xeps(t)}{dt} =  a(\xeps(t),\yeps(t)) + \eps^{-1} b(\xeps(t),\yeps(t)) 
 =F(\xeps(t))\frac{dV_\eps(t)}{dt} +  H(\xeps(t))\frac{dW_\eps(t)}{dt} \;.
\end{equ}
In the incremental form, we have precisely \eqref{e:inceps}. 
\end{proof}


\subsection{Tensor product of Holder spaces}
\label{s:holder_tensor}

As preparation for the application of rough path theory in Section~\ref{s:conv_to_RDE},
we define the tensor product $\CB\otimes\CB$ for the H\"older space
$\CB=C^{\theta}(\R^d,\R^d)$.

First we consider the scalar situation.
Define $C^{\theta , \theta}(\R^d\times\R^d)$ to be the space of continuous functions $u:\R^d\times\R^d\to\R$ with bounded norm
\begin{equ}\label{e:theta_theta}
\norm{u}_{C^{\theta,\theta}}= \sum_{|k|\leq \floor{\theta}}  \sup_x \norm{D_x^k u(x,\cdot) }_{C^\theta} +  \sum_{|k| =  \floor{\theta}}  \sup_{x,x'} \frac{\norm{\delta_{x,x'} D_x^k u(x,\cdot) }_{C^\theta}}{|x-x'|^{\theta-\floor{\theta}}}  \;,
\end{equ}
with the shorthand $\delta_{x,x'} u(x,z) = u(x,z) - u(x',z)$,
where the second summation is omitted if $\theta$ is an integer. 
Expanding the inner norm, we obtain 
\begin{equs}
\norm{u}_{C^{\theta,\theta} } &=  \sum_{|k| \leq \floor{\theta}, |l| \leq \floor{\theta}} \sup_{x,z} |D_x^k D_z^l u(x,z)|   +  \sum_{|k| = \floor{\theta}, |l| \leq \floor{\theta}} \sup_{x,x',z} \frac{| \delta_{x,x'} D_x^k D_z^l u(x,z)|}{|x-x'|^{\theta-\floor{\theta}}}   \\ & +  \sum_{|k| \leq \floor{\theta}, |l| =  \floor{\theta}} \sup_{x,z,z'} \frac{|\delta_{z,z'} D_x^k D_z^l  u(x,z)|}{|z-z'|^{\theta-\floor{\theta}}}   +  \sum_{|k|, |l| = \floor{\theta}} \sup_{x,x',z,z'} \frac{|\delta_{x,x'}\delta_{z,z'}D_x^k D_z^l u(x,z)|}{(|x-x'||z-z'|)^{\theta - \floor{\theta}}}  \;.
\end{equs}
Here, we use the shorthand $\delta_{z,z'} u(x,z) = u(x,z) - u(x,z')$ and $\delta_{x,x'} \delta_{z,z'} u(x,z) = u(x,z) - u(x',z) - u(x,z') + u(x',z') $. It follows that we could equally define the norm in \eqref{e:theta_theta} with the roles of $x$ and $z$ reversed. 


%
Let 
$\iota : C^\theta(\R^d) \otimes_a C^\theta(\R^d) \hookrightarrow C^{\theta,\theta}(\R^d\times\R^d)$ denote the embedding
\[
\iota\bigg(\sum_n u_n\otimes v_n\bigg)(x,z)=\sum_n u_n(x)v_n(z).
\]
Define the tensor product norm $\norm{\cdot}_{C^\theta\otimes C^\theta}$ by setting 
$\norm{\sum_n u_n\otimes v_n}_{C^\theta \otimes C^\theta}=
\norm{\iota(\sum_n u_n~\otimes~v_n)}_{C^{\theta,\theta}}$, and take the completion
 to obtain the tensor product space $(C^\theta(\R^d)\otimes C^\theta(\R^d),\norm{\cdot}_{C^\theta\otimes C^\theta})$.

\begin{prop} \label{prop:tensor}
The tensor product norm $\norm{\cdot}_{C^\theta \otimes C^\theta}$ is admissible. 
\end{prop}

\begin{proof}
By an obvious factorization we have that $\norm{u\otimes v}_{C^\theta\otimes C^\theta} = \norm{u}_{C^\theta} \norm{v}_{C^\theta}$ for every $u,v \in C^\theta(\R^d)$. It remains to check that $\norm{f\otimes g}_{L(C^\theta \otimes_a C^\theta,\reals)} \leq \norm{f}_{L(C^\theta,\reals)} \norm{g}_{L(C^\theta,\reals)}$ for all $f,g \in L(C^\theta,\reals)$. Notice that
\begin{equs}\label{e:fgbound}
\big| (f\otimes g) \sum_{n} u_n \otimes v_n \big| &= \big| \sum_{n} f(u_n) g(v_n)  \big| = \big| f\big( \sum_{n} u_n g(v_n) \big) \big| \\ & \leq \norm{f}_{L(C^\theta,\reals)}  \big\|\sum_{n} u_n g(v_n)\big\|_{C^\theta}\;.
\end{equs}
But 
\begin{equs}
\big\|\sum_{n} u_n g(v_n)\big\|_{C^\theta} &= \sum_{|k|\leq \floor{\theta}}\sup_x |\sum_n D_x^k u_n(x) g(v_n)| +  \sum_{|k|= \floor{\theta}}  \sup_{x,x'}\frac{| \sum_{n} \delta_{x,x'}D_x^k  u_n(x) g(v_n)  |}{|x-x'|^{\theta - \floor{\theta}}} \;.
\end{equs}
For each fixed $x$,
\begin{equs}
\big|\sum_n D^k_x u_n(x) g(v_n)\big|  & = \big|g\big( \sum_n D_x^k u_n(x) v_n\big)\big| 
\leq \norm{g}_{L(C^\theta,\reals)} \big\|\sum_{n} D_x^k u_n(x) v_n\|_{C^\theta} \\ &
\leq \norm{g}_{L(C^\theta,\reals)} \sum_{n} |D_x^k u_n(x)|\norm{v_n}_{C^\theta} \;,
\end{equs}
and similarly $| \sum_{n} \delta_{x,x'} D_x^k u_n(x) g(v_n)  |\le \norm{g}_{L(C^\theta,\reals)}\sum_{n} |\delta_{x,x'}D_x^k u_n(x)|\norm{ v_n}_{C^\theta}$. Substituting this back into \eqref{e:fgbound}, we obtain
\begin{equ}
\big| (f\otimes g) \sum_{n} u_n \otimes v_n  \big| \leq \norm{f}_{L(C^\theta,\reals)} \norm{g}_{L(C^\theta,\reals)} \big\|\iota\big( \sum_{n} u_n \otimes v_n \big) \|_{C^{\theta,\theta}}\;.
\end{equ}
Hence $\norm{f\otimes g}_{L(C^\theta \otimes C^\theta,\reals)} \leq \norm{f}_{L(C^\theta,\reals)} \norm{g}_{L(C^\theta,\reals)}$. 
\end{proof}
%

%
Next, we define the tensor product $\CB\otimes\CB=C^{\theta}(\R^d,\R^d)\otimes C^{\theta}(\R^d,\R^d)$ to be the space
of \mbox{$d\times d$} ``matrices'' with entries in $C^\theta(\R^d) \otimes C^\theta(\R^d)$, endowed with
the norm \newline $\norm{\sum_n u_n\otimes v_n}_{C^\theta\otimes C^\theta}=
\sum_{i,j=1}^d \norm{\sum_n u_n^i\otimes v_n^j}_{C^\theta\otimes C^\theta}$.

\begin{cor} \label{cor:tensor}
$C^\theta(\R^d,\R^d)\otimes C^\theta(\R^d,\R^d)$ is a Banach space with admissible tensor product norm $\norm{\cdot}_{C^\theta\otimes C^\theta}$.
\end{cor}

\begin{proof}
Completeness is an immediate consequence of the completeness of $C^\theta(\R^d)\otimes C^\theta(\R^d)$.   
Admissibility of $\norm{\cdot}_{C^\theta\otimes C^\theta}$ is proved by a calculation similar to the one in Proposition~\ref{prop:tensor}.
\end{proof}

\section{Convergence to the RDE}\label{s:conv_to_RDE}

The objective of this section is to use Corollary \ref{cor:rde} to characterize the $\eps \to 0$ limit of the  solution $x_\eps$ for the
fast-slow ODE~\eqref{e:fastslow} as the solution to an RDE. 

We suppose throughout that Assumptions~\ref{ass:clt} and~\ref{ass:moments}
are valid with $p>3$ and $\kappa>0$, and that
$a$, $b$ are as in Theorem~\ref{thm:fastslow_besov}.
Define $V_\eps:[0,T]\to\CA$ and $W_\eps:[0,T]\to\CB$
as in Section~\ref{s:formulation};  in particular, $\CA=C^{1+}(\R^d,\R^d)$
and $\CB = C^\theta(\R^d,\R^d)$
where $\theta>2+2/(p-1)$ and $\alpha>\theta+d/p$.
Define $\CB \otimes \CB$ as in Section \ref{s:holder_tensor}, and 
the iterated integral $\BBW_\eps : [0,T] \to \CB \otimes \CB$ by
\begin{equ}
\BBW_\eps(t) = \int_0^t W_\eps \otimes dW_\eps  = \eps^{-2} \int_0^t \int_0^r b(\cdot,\yeps(u))\otimes b(\cdot,\yeps(r)) du dr\;,
\end{equ}
The integral is well defined by Remark \ref{rmk:tensor1}. 
Let $\BW_\eps=(W_\eps,\BBW_\eps):[0,T]\to \CB\times(\CB\otimes\CB)$. Define $\abar\in\CA$ by
\begin{equ}\label{e:Veq}
\abar =  \int_\Omega a(\cdot, y) d\mu(y) \;.
\end{equ}
We now state the main result of this section.
\begin{thm}\label{thm:xeps_limit}
The family $\{\xeps\}_{\eps>0}$ is tight in $C([0,T] ; \reals^d)$. Moreover, every limit point $X$ satisfies an RDE of the form
\begin{equ}\label{e:lemma_RDE}
dX = F(X) \abar dt + H(X) d\BW \quad , \quad X(0) = \xi\;,
\end{equ}
where $\BW$ is a limit point of $\{\BW_\eps\}_{\eps > 0}$
in $\CC^\gamma$ for all $\gamma\in(\frac13,\frac12-\frac{1}{2p})$. 
\end{thm}

\begin{rmk} \label{rmk:xeps_limit}
Evidently it suffices to prove Theorem~\ref{thm:xeps_limit}
for $\gamma$ arbitrarily close to $\gamma^*=\frac12-\frac{1}{2p}$.
By Lemma~\ref{lem:xeps_rde}, $H$ is $C^{\theta}$ where
$\theta>2+\frac{2}{p-1}=\frac{1}{\gamma^*}$.
Hence $\theta>\frac{1}{\gamma}$ for $\gamma$ close to $\gamma^*$ ensuring that
$H$ has the regularity required in Corollary~\ref{cor:rde}.
\end{rmk}

The second aim of this section is to characterise the finite-dimensional distributions of the limit points of $\BW_\eps$.   This is done 
in Lemma~\ref{lem:fdd}. 

To control the tightness of $\BW_\eps$, we make use of Besov spaces described in
Subsection~\ref{s:besov}.
In Subsection~\ref{s:tight}, we prove tightness of $(V_\eps,\BW_\eps)$ and
deduce tightness of $x_\eps$.
In Subsection~\ref{s:limit}, we complete the proof of Theorem \ref{thm:xeps_limit} and characterize the limit points of 
$(V_\eps,\BW_\eps)$. 


\subsection{Besov spaces}
\label{s:besov}
Let $s>0$ and fix (arbitrarily) an integer $m>s$. The classical Besov space 
$B_p^s=B_{p}^s(\reals^d)$ can be defined (for all $p\geq 1$) as the set of all $L_p$ functions $u:\R^d\to\R$ such that 
\begin{equ}
\norm{u}_{B_p^s} =  \bigg(\norm{u}_{L_p}^p + \int_{|\sigma|\leq 1} |\sigma|^{-sp-d} \norm{\Delta_\sigma^m u}_{L_p}^p  d\sigma \bigg)^{1/p} < \infty
\end{equ}  
where
\begin{equ}
\Delta_\sigma u (x) = u(x+\sigma) - u(x) \textand \Delta_\sigma^{l+1} = \Delta_\sigma \circ \Delta_\sigma^l
\end{equ}
and $\norm{\cdot}_{L_p}$ is the standard $L_p$ norm on $\reals^d$. 
For more details, see \cite{tri1,tri3}. 
\begin{rmk}
The classical Besov spaces $B_{p,q}^s$ typically come with two indices of integrability and norm $\norm{u}_{B_{p,q}^s} =  \bigg( \norm{u}_{L_p}^{q} + \int_{|\sigma|\leq 1} |\sigma|^{-sq-d} \norm{\Delta_\sigma^m u}_{L_p}^q  d\sigma \bigg)^{1/q}$. In this article, we always take $p=q$. Hence our Besov spaces $B_p^s$ are really the same as the Slobodeckij spaces, when $s \neq \naturals$. The norm we employ is not the most standard choice but is well known to be equivalent to the usual Besov norm \cite[Section 2.5.12]{tri1}. 
\end{rmk}

For $\kappa\in[0,1)$, we also introduce a norm on functions $u=u(x,y)$ that are $B_p^s$ in the $x$ variable and $C^\kappa$ in the $y$ variable: 
\begin{equ}
\norm{u}_{B_p^s ; C^\kappa} =  \bigg( \int \norm{u(x,\cdot)}_{C^\kappa}^p dx + \int_{|\sigma|\leq 1} \sigma^{-sp-d} \bigg( \int \norm{\Delta^m_\sigma u(x,\cdot)}_{C^\kappa}^p dx \bigg)d\sigma\bigg)^{1/p}\;,
\end{equ}
with $\Delta_{\sigma}^m$ acting only in the $x$ component. 

\begin{lemma}\label{lem:besov_embed}
We have the embeddings
\begin{equ}
\norm{u}_{C^\theta} \lesssim \norm{u}_{B_p^{\theta + d/p}}
\end{equ}
and 
\begin{equ}
\norm{u}_{C^{\theta,\theta}} \lesssim \bigg( \int \norm{u(x,\cdot)}^p_{B_p^{\theta + d/p}} dx + \int_{|\sigma|\leq 1} |\sigma|^{-p\theta - 2d} \int \norm{\Delta^m_{x,\sigma} u(x,\cdot)}^p_{B_p^{\theta + d/p}}dx d\sigma \bigg)^{1/p}\;.
\end{equ}

\end{lemma}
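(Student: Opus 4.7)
My plan is to derive both inequalities from the classical scalar Besov--H\"older embedding
$B_p^s(\R^d)\hookrightarrow C^{s-d/p}(\R^d)$, valid when $sp>d$ (see e.g.~\cite{tri1}), applied with $s=\theta+d/p$. The first inequality is precisely this embedding. The second is obtained by applying the same embedding twice, once in each of the two variables.

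For the second inequality I would recast the left-hand side as a Banach-space-valued H\"older norm: with $E=C^\theta(\R^d)$, regard $u$ as the $E$-valued function $U(x):=u(x,\cdot)$, and observe directly from~\eqref{e:theta_theta} that $\norm{u}_{C^{\theta,\theta}} = \norm{U}_{C^\theta(\R^d;E)}$. Next I would invoke the $E$-valued analogue of the scalar Besov--H\"older embedding to get
\begin{equ}
\norm{U}_{C^\theta(\R^d;E)} \lesssim \bigg(\int \norm{u(x,\cdot)}_{C^\theta}^p\,dx + \int_{|\sigma|\le 1}|\sigma|^{-p(\theta+d/p)-d}\int \norm{\Delta^m_\sigma u(x,\cdot)}_{C^\theta}^p\,dx\,d\sigma\bigg)^{1/p}.
\end{equ}
Finally I would apply the scalar embedding $\norm{\cdot}_{C^\theta}\lesssim \norm{\cdot}_{B_p^{\theta+d/p}}$ pointwise in $x$ inside each integrand (noting that $\Delta^m_\sigma u(x,\cdot)$ is a finite linear combination of translates of $u$ in the first variable and therefore still $C^\theta$ in the second). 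The arithmetic $-p(\theta+d/p)-d=-p\theta-2d$ then matches the right-hand side of the claimed inequality.

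The only step that is not completely off the shelf is the $E$-valued Besov--H\"older embedding used in the middle. The scalar proof, for instance via the iterated-difference characterization of $B_p^s$, relies only on the triangle inequality and Fubini, so it carries over verbatim once absolute values are replaced by $\norm{\cdot}_E$; this is the principal (and essentially only) technical point. If one wishes to avoid vector-valued harmonic analysis altogether, the same bound can be obtained by expanding $\norm{u}_{C^{\theta,\theta}}$ into its four constituent sums, treating each as a H\"older-type quantity of a scalar $x$-function after applying the scalar $y$-embedding uniformly in $x$, and then applying the scalar $x$-embedding term by term; this is more bookkeeping but entirely elementary.
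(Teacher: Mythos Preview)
Your proposal is correct and matches the paper's own proof, which is a one-line sketch: the first estimate is cited from Triebel, and for the second one applies the first estimate in the $x$ coordinate and then in the $z$ coordinate. Your write-up via the $E$-valued embedding (and the alternative term-by-term expansion you mention) is exactly a fleshed-out version of that iteration, and your exponent check $-p(\theta+d/p)-d=-p\theta-2d$ is the only bookkeeping needed.
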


\begin{proof}
The first estimate can be found in \cite[Section 2.7.1]{tri1}. The second estimate is obtained by applying the first estimate in the $x$ coordinate (for each fixed $z$) and then in the $z$ coordinate (for each fixed $x$). 
\end{proof}

\begin{lemma}\label{lem:compact}
If $u \in C^{\alpha,\kappa}(\R^d\times M)$ and has compact support (in the sense of Theorem \ref{thm:fastslow_besov}) then $\norm{u}_{B_p^s ; C^\kappa} < \infty$ for any $s < \alpha$. 
\end{lemma}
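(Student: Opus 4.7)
The plan is to verify directly that both of the two terms defining $\norm{u}_{B_p^s;C^\kappa}$ are finite, separating off the $L^p$-in-$x$ part by compact support and estimating the mixed difference term via a standard iterated-difference bound applied in the $x$-variable.

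The first term is immediate: since $u(x,y)=0$ whenever $|x|>E$, the integrand $\norm{u(x,\cdot)}_{C^\kappa}$ is supported in the ball of radius $E$ and uniformly bounded by $\norm{u}_{C^{0,\kappa}}\le \norm{u}_{C^{\alpha,\kappa}}$, so $\int\norm{u(x,\cdot)}_{C^\kappa}^p\,dx$ is finite.

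For the second term I would use the classical iterated-difference estimate $|\Delta_\sigma^m f(x)|\lesssim |\sigma|^{\min(m,\gamma)}\norm{f}_{C^\gamma}$, valid for any $f\in C^\gamma(\R^d)$. Set $\gamma=\min(m,\alpha)$; since $m>s$ (by the choice of $m$ in the definition of the Besov norm) and $\alpha>s$ (by hypothesis), one has $\gamma>s$. Applied to $x\mapsto u(x,y)$ with $y$ held fixed, the estimate controls the sup-in-$y$ part of $\norm{\Delta_\sigma^m u(x,\cdot)}_{C^\kappa}$ uniformly in $x$. Applied to $x\mapsto u(x,y)-u(x,y')$ and then divided by $|y-y'|^\kappa$, it controls the H\"older-$\kappa$-in-$y$ seminorm; this uses the fact that, directly from the definition of the mixed H\"older norm, $\norm{u(\cdot,y)-u(\cdot,y')}_{C^\alpha}\lesssim|y-y'|^\kappa\norm{u}_{C^{\alpha,\kappa}}$. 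Combining the two gives $\sup_x\norm{\Delta_\sigma^m u(x,\cdot)}_{C^\kappa}\lesssim|\sigma|^\gamma\norm{u}_{C^{\alpha,\kappa}}$.

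Because $u$ is compactly supported in $x$, so is $\Delta_\sigma^m u(\cdot,y)$, uniformly inside the ball $\{|x|\le E+m\}$ for all $|\sigma|\le 1$. Hence $\int\norm{\Delta_\sigma^m u(x,\cdot)}_{C^\kappa}^p\,dx\lesssim|\sigma|^{\gamma p}$. Inserting this bound into the definition of $\norm{u}_{B_p^s;C^\kappa}$ and switching to polar coordinates $\sigma=r\omega$ reduces the $\sigma$-integral (up to a constant) to $\int_0^1 r^{(\gamma-s)p-1}\,dr$, which converges precisely because $\gamma>s$. The only slightly delicate step is the auxiliary bound $\norm{u(\cdot,y)-u(\cdot,y')}_{C^\alpha}\lesssim|y-y'|^\kappa\norm{u}_{C^{\alpha,\kappa}}$: one has to unpack $\norm{\cdot}_{C^{\alpha,\kappa}}$ into contributions from derivatives of order $\le\floor{\alpha}$ (where the $C^\kappa$-in-$y$ sup gives what is needed) and of order $=\floor{\alpha}$ (where the same holds for the H\"older-in-$x$ seminorm measured in $C^\kappa$), but this is direct from the definition and is the only nontrivial ingredient.
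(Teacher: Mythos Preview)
Your proof is correct and follows essentially the same approach as the paper: bound the $L^p$ part via compact support, then establish $\sup_x\norm{\Delta_\sigma^m u(x,\cdot)}_{C^\kappa}\lesssim|\sigma|^\gamma$ for some $\gamma>s$ and integrate in polar coordinates. The only cosmetic difference is that the paper fixes $m=\lceil s\rceil$ and rederives the iterated-difference bound by hand via the mean value theorem, whereas you invoke it as a classical fact and keep track of the exponent as $\gamma=\min(m,\alpha)$; your formulation is arguably cleaner since it transparently covers all relations between $m$ and $\alpha$.
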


\begin{proof}
Since $m>s$ is arbitrary, it suffices to take $m=\lceil s\rceil$.
The $\norm{u}_{L_p}$ part is obviously finite for any $p\geq 1$, since $u$ is bounded and has compact support. Hence it suffices to bound the semi-norm part of the Besov norm. We claim that 
\begin{equ}\label{e:Deltam_ineq}
\sup_{x\in\reals^d}\norm{\Delta^m_{\sigma} u(x,\cdot)}_{C^\kappa} \leq \norm{u}_{C^{\alpha,\kappa}} |\sigma|^\alpha\;.
\end{equ}
In this case
\begin{equ}
\int_{|\sigma|\leq 1} |\sigma|^{-sp-d} \int \norm{\Delta^m_{x,\sigma} u(x,\cdot)}_{C^\kappa}^p dx d\sigma  \leq \norm{u}_{C^{\alpha,\kappa}}^p  \int_{|\sigma|\leq 1} |\sigma|^{-sp-d} |\sigma|^{\alpha p} d\sigma  <\infty
\end{equ}
as required, since $\alpha > s$. 

All that is left is to prove the inequality \eqref{e:Deltam_ineq}. By the chain rule, we have that
\begin{equ}
u(x + \sigma,y) - u(x,y) = \int_0^1 D_x u(x + (1-\zeta)\sigma ,y) d\zeta \cdot \sigma \;.
\end{equ}
Repeating this identity, and writing $\delta_{y,y'} u(x,y) = u(x,y) - u(x,y')$ we obtain
\begin{equs}
\Delta^{m-1}_\sigma &u (x,y) - \Delta^{m-1}_\sigma u (x,y')\\ &=  \int_{[0,1]^{m-1}} D_x^{m-1} \delta_{y,y'} u(x + ((m-1)-(\zeta_1 + \dots + \zeta_{m-1})\sigma , y )d\zeta \cdot \sigma^{m-1}\;.
\end{equs}
It follows easily that 
\begin{equs}
|\Delta_\sigma^m u(x,y) - \Delta_\sigma^m u(x,y')|  &\leq \sup_x |D^{m-1} \delta_{y,y'} u (x+\sigma,y) - D^{m-1} \delta_{y,y'} u (x,y) | |\sigma|^{m-1}\\ &\leq \norm{u}_{C^{m-1+\zeta,\kappa}} |\sigma|^{m-1 + \zeta} |y-y'|^\kappa\;. 
\end{equs}
This proves \eqref{e:Deltam_ineq}.
\end{proof}

\subsection{Tightness of $(V_\eps,\BW_\eps)_{\eps>0}$ and $(x_\eps)_{\eps>0}$}
\label{s:tight}

Firstly, we estimate 
$\norm{V_\eps(s,t)}_{\CA}$.

\begin{lemma} \label{lem:tightV}
We have that 
$\sup_{\eps > 0}\norm{V_\eps(s,t)}_{\CA} \lesssim \norm{a}_{C^{\eta,0}} |t-s|$
uniformly over $\Omega$. 
\end{lemma}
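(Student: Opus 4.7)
The plan is to show that this lemma is just the trivial statement that the norm of a Banach-space-valued Bochner integral is dominated by the integral of the norm, applied to $V_\eps(s,t) = \int_s^t a(\cdot,\yeps(r))\,dr \in \CA = C^{1+}(\R^d,\R^d)$. Nothing beyond the definition of the $C^{\eta,0}$ norm enters, so the argument is a direct unraveling of that definition, in the same spirit as (but a bit weaker than) Proposition~\ref{prop:VW}.

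Concretely, I would fix $\eta>1$ with $a\in C^{\eta,0}(\R^d\times M,\R^d)$, and then estimate $\norm{V_\eps(s,t)}_{\CA} = \sum_{i=1}^d \norm{V_\eps^i(s,t)}_{C^\eta}$ term by term. Since $a\in C^{1+,0}$, for any multi-index $k$ with $|k|\le 1$ one may differentiate under the integral sign to get
\[
D_x^k V_\eps^i(s,t)(x) = \int_s^t D_x^k a^i(x,\yeps(r))\,dr,
\]
so that
\[
\sup_{x\in\R^d}|D_x^k V_\eps^i(s,t)(x)| \le |t-s|\sup_{x\in\R^d,\,y\in M}|D_x^k a^i(x,y)|.
\]
For $|k|=1$ the corresponding Hölder seminorm is handled the same way by pulling the finite difference inside the $r$-integral:
\[
\sup_{x\ne x'}\frac{|D_x^k V_\eps^i(s,t)(x) - D_x^k V_\eps^i(s,t)(x')|}{|x-x'|^{\eta-1}} \le |t-s|\sup_{x\ne x',\,y\in M}\frac{|D_x^k a^i(x,y) - D_x^k a^i(x',y)|}{|x-x'|^{\eta-1}}.
\]
Summing over $i$ and over the admissible multi-indices $k$, the right-hand sides assemble into precisely $|t-s|\,\norm{a}_{C^{\eta,0}}$, which gives the claimed bound.

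The uniformity in $\eps$ (and in the base point $y_0\in\Omega$ hidden inside $\yeps$) is automatic: all the suprema on the right-hand side are over $(x,y)\in \R^d\times M$ and therefore bound the integrand pointwise in $r$, independently of the particular trajectory $\yeps(\cdot)$. There is essentially no obstacle in this lemma; the only thing worth pausing on is the choice of $\eta>1$, which is the exponent delivered by the hypothesis $a\in C^{1+,0}$ and is the same $\eta$ appearing on the right-hand side of the displayed estimate.
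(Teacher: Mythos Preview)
Your proof is correct and is essentially identical to the paper's own argument: both pull the derivatives and finite differences under the time integral, bound the integrand uniformly in $y$, and sum over multi-indices to reconstitute $\norm{a}_{C^{\eta,0}}|t-s|$. The only cosmetic difference is that the paper works component-by-component (reducing to real-valued $a$) rather than summing over $i$ at the end.
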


\begin{proof}
Without loss, we suppose that $a$ is real-valued.
Write $V_\eps(s,t;x) = \int_s^t a(x,\yeps(r))dr$.  We have
\begin{equ}
|D_x^k V_\eps(s,t;x)| \leq \norm{D_x^k a(x,\cdot)}_{C^0} |t-s| ,\quad
|\delta_{x,x'}D_x^k  V_\eps(s,t;x)| \leq \norm{\delta_{x,x'}D_x^k a^i(x,\cdot)}_{C^0} |t-s| 
\end{equ}
and hence
\begin{equs} 
& \norm{V_\eps(s,t)}_{\CA} 
  = \sum_{|k|\leq \floor{\eta}} \sup_x |D_x^k V_\eps(s,t;x)| 
  +  \sum_{|k| = \floor{\eta}} \sup_{x,x'} \frac{|\delta_{x,x'}D_x^k V_\eps(s,t; x)|}{|x-x'|^{\eta - \floor{\eta}}} \\
& \qquad  \le \bigg( \sum_{|k|\leq \floor{\eta}} \sup_x \norm{D^k_x a(x,\cdot)}_{C^0} 
  +  \sum_{|k| = \floor{\eta}} \sup_{x,x'} \frac{\norm{\delta_{x,x'}D_x^k a(x,\cdot)}_{C^0}}{|x-x'|^{\eta - \floor{\eta}}} \bigg) |t-s| \\
&\qquad = \norm{a}_{C^{\eta,0}} |t-s|\;,
\end{equs} 
as required.
\end{proof}

We now obtain an analogous estimate for $\BW_\eps$. Again, we may suppose without loss that $b$ is real-valued.  First, we introduce the notation
\[
\Delta_\sigma^m W_\eps(s,t;x)=\eps^{-1}\int_s^t\Delta_\sigma^mb(x,y_\eps(r))dr\;,
\]
for $m\ge0$, where the operator $\Delta_\sigma^m$ is omitted when $m=0$.
Similarly, we write
\[
\Delta_\sigma^m\Delta_{\sigma'}^{m'}\BBW_\eps(s,t;x,x')
=\eps^{-2}\int_s^t\int_s^r
\Delta_\sigma^mb(x,y_\eps(u))
\Delta_{\sigma'}^{m'}b(x',y_\eps(r)) dudr\;.
\]

\begin{prop} \label{prop:tight}
\begin{itemize}
\item[(a)]
$\BE_\mu \bigg(\displaystyle\sup_{s,t} \frac{|\Delta_\sigma^m  W_\eps(s,t; x)|}{|t-s|^\gamma}\bigg)^{2p} \lesssim \norm{\Delta_\sigma^m b(x,\cdot)}_{C^\kappa}^{2p} $.
\item[(b)]
$\BE_\mu \bigg(\displaystyle\sup_{s,t} \frac{|\Delta_{x,\sigma}^m \Delta_{x',\sigma'}^{m'} \BBW_\eps(s,t; x , x')|}{|t-s|^{2\gamma}}\bigg)^{p} \lesssim \norm{\Delta_{x,\sigma}^m b(x,\cdot)}_{C^\kappa}^p \norm{\Delta_{x',\sigma'}^{m'} b(x',\cdot)}_{C^\kappa}^p $.
\end{itemize}
\end{prop}

\begin{proof}
Recall from the introduction that $y_\eps(t)=y(t\eps^{-2})=\phi_{t\eps^{-2}}(y_0)$ where $\phi$ is the underlying fast flow
and $y_0\in\Omega$ is the initial condition.   Hence by change of variables,
\begin{equ}
\Delta_\sigma^m  W_\eps(s,t; x)(y)
=\eps\int_{s\eps^{-2}}^{t\eps^{-2}}\Delta_\sigma^mb(x,\phi_r y)dr.
\end{equ}
But $\Delta_\sigma^mb(x,\cdot)\in C^\kappa_0(\Omega,\reals)$ for each $x$, $\sigma$, so  by Proposition \ref{prop:moments},
\[
(\BE_\mu |\Delta_\sigma^mW_\eps(s,t;x)|^{2p})^{1/(2p)} 
\lesssim \norm{\Delta_\sigma^mb(x,\cdot)}_{C^\kappa} |t-s|^{1/2},
\]
uniformly in $s,t,x,\sigma,\eps$.
Hence part (a) follows from the Kolmogorov criterion, Lemma~\ref{lem:kolm}(a).
Part (b) is proved almost identically using Lemma \ref{lem:kolm}(b).
\end{proof}

\begin{lemma} \label{lem:tight}
We have that $\sup_{\eps > 0}\BE_\mu \normy{\BW_\eps}_{\CC^\gamma}^{2p} < \infty$ 
for any $\gamma \in(\frac13, \frac12 - \frac{1}{2p})$.
\end{lemma}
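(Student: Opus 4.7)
The plan is to dominate both summands appearing in $\normy{\BW_\eps}_{\CC^\gamma}^{2p}$ by spatial Besov norms via Lemma~\ref{lem:besov_embed}, then swap the temporal supremum with the spatial integrals that define those Besov norms, and finally invoke the pointwise-in-$x$ moment estimates of Proposition~\ref{prop:tight}. The resulting bounds reduce to $\|b\|_{B_p^{\theta+d/p};C^\kappa}$-type quantities, which are finite by Lemma~\ref{lem:compact} since $b$ is compactly supported and $\alpha>\theta+d/p$. A small subtlety is that the two pieces of $\normy{\cdot}_{\CC^\gamma}$ are raised to different powers: expanding the $2p$-th power of $\normy{\BW_\eps}_{\CC^\gamma}$ produces the $2p$-th moment of $\sup_{s,t}\|W_\eps(s,t)\|_\CB/|t-s|^\gamma$ but only the $p$-th moment of $\sup_{s,t}\|\BBW_\eps(s,t)\|_{\CB\otimes\CB}/|t-s|^{2\gamma}$.

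\textbf{The $W_\eps$ piece.} Using $\|W_\eps(s,t)\|_\CB\lesssim\|W_\eps(s,t)\|_{B_p^{\theta+d/p}}$, I write the $2p$-th power of the Besov norm as the square of the $p$-th power, which is a sum of an $L^p$ integral in $x$ and an $L^p$ integral in $(x,\sigma)$ with weight $|\sigma|^{-(\theta+d/p)p-d}$. Expanding the square via Fubini yields double integrals in $(x,x')$ and $(x,x',\sigma,\sigma')$. Dividing by $|t-s|^{2p\gamma}$ and taking $\sup_{s,t}$, I push the supremum inside using the elementary inequality $\sup_{s,t} \frac{a_{s,t}\,b_{s,t}}{|t-s|^{2p\gamma}}\le \bigl(\sup_{s,t}\frac{a_{s,t}}{|t-s|^{p\gamma}}\bigr)\bigl(\sup_{s,t}\frac{b_{s,t}}{|t-s|^{p\gamma}}\bigr)$. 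Taking $\BE_\mu$ and applying Cauchy--Schwarz in probability factorises the resulting expression into products of second moments $\bigl(\BE_\mu\sup_{s,t}|\Delta_\sigma^m W_\eps(s,t;x)|^{2p}/|t-s|^{2p\gamma}\bigr)^{1/2}$, each of which Proposition~\ref{prop:tight}(a) controls by $\|\Delta_\sigma^m b(x,\cdot)\|_{C^\kappa}^{p}$. Reassembling the pieces gives the bound $\lesssim\|b\|_{B_p^{\theta+d/p};C^\kappa}^{2p}$, finite by Lemma~\ref{lem:compact}.

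\textbf{The $\BBW_\eps$ piece.} Here only the $p$-th moment of $\|\BBW_\eps(s,t)\|_{\CB\otimes\CB}$ is required, so the squaring step above is avoided. Apply the second embedding of Lemma~\ref{lem:besov_embed} componentwise to $\BBW_\eps(s,t;\cdot,\cdot)\in C^{\theta,\theta}(\R^d\times\R^d,\R^{d\times d})$ to bound $\|\BBW_\eps(s,t)\|_{\CB\otimes\CB}^p$ by a sum of four integrals of $|\Delta_{x,\sigma}^m\Delta_{x',\sigma'}^{m'}\BBW_\eps(s,t;x,x')|^p$ over $(x,x',\sigma,\sigma')$ with appropriate $|\sigma|^{-(\theta+d/p)p-d}|\sigma'|^{-(\theta+d/p)p-d}$ weights (and the two mixed terms where one of the $\Delta^m$ operators is absent). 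Pushing the temporal sup inside the integrals, taking $\BE_\mu$ and using Proposition~\ref{prop:tight}(b) yields the pointwise bound $\|\Delta_{x,\sigma}^m b(x,\cdot)\|_{C^\kappa}^p\|\Delta_{x',\sigma'}^{m'} b(x',\cdot)\|_{C^\kappa}^p$. Integrating in $(x,x',\sigma,\sigma')$ and invoking Lemma~\ref{lem:compact} twice (once in each factor) produces a bound of order $\|b\|_{B_p^{\theta+d/p};C^\kappa}^{2p}<\infty$.

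\textbf{Main obstacle.} The delicate part is the $W_\eps$ estimate, because a naive Minkowski/Jensen argument would give only the $p$-th (rather than $2p$-th) moment of the $\CB$-valued H\"older seminorm. The doubling trick of expanding the square of the Besov $L^p$ integral via Fubini and then using Cauchy--Schwarz both spatially (reducing to a product of $(x,x')$ integrals) and probabilistically (reducing to the second moments supplied by Proposition~\ref{prop:tight}) is what makes the argument close with the sharp regularity threshold $\gamma<\tfrac12-\tfrac{1}{2p}$. Once this is done, the remainder is bookkeeping of Besov exponents, together with the condition $\alpha>\theta+d/p$ needed to apply Lemma~\ref{lem:compact} to $b$.
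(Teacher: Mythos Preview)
Your proposal is correct and follows essentially the same route as the paper. The one cosmetic difference is in the $W_\eps$ step: where you expand the square of the Besov $L^p$ integral via Fubini and then apply Cauchy--Schwarz in $L^2(d\mu)$, the paper invokes Minkowski's integral inequality for $L^2(d\mu)$ directly to pass $\BE_\mu$ through the spatial integral, arriving at $\bigl(\int(\BE_\mu c_k^{2p})^{1/2}dz\bigr)^{1/p}$ in one stroke; your doubling-plus-Cauchy--Schwarz is simply an inline proof of that same inequality, so the two arguments are equivalent.
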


\begin{proof}
%
%
%
%
%

By Lemma \ref{lem:besov_embed}, $\norm{W_\eps(s,t)}_{\CB}\lesssim\norm{W_\eps(s,t)}_{B_p^{\theta+d/p}}$ and hence
\begin{equs} 
& \sup_{s,t} \frac{\norm{W_\eps(s,t)}_{\CB}}{|t-s|^\gamma} \\
&\lesssim \sup_{s,t} \frac{1}{|s-t|^\gamma}\bigg( \int  |W_\eps(s,t ; x)|^p  dx 
 + \int_{|\sigma|\leq 1} |\sigma|^{-\theta p-2 d} \int |\Delta_\sigma ^m W_\eps(s,t; x)|^p dx d\sigma \bigg)^{1/p} \;.
\end{equs}
Taking the supremum inside the integrals and using the inequality $(x+y)^{1/p} \leq x^{1/p} + y^{1/p}$, 
\begin{equs} \label{e:firstmo}
& \sup_{s,t} \frac{\norm{W_\eps(s,t)}_{\CB}}{|t-s|^\gamma} 
\leq \bigg( \int \bigg(  \sup_{s,t} \frac{|W_\eps(s,t ; x)|}{|t-s|^{\gamma}}\bigg)^p dx \bigg)^{1/p}\\
& \qquad + \bigg( \int_{|\sigma|\leq 1} |\sigma|^{-\theta p-2d} \int \bigg(\sup_{s,t} \frac{|\Delta_\sigma ^m W_\eps(s,t; x)|}{|t-s|^{\gamma}}\bigg)^p dx d\sigma \bigg)^{1/p} = 
\sum_{k=1}^2 \bigg(\int c_k^p dz\bigg)^{1/p}\;,
\end{equs}
where $dz=dx$ or $dz=|\sigma|^{-\theta p-2d}dxd\sigma$ respectively.
Applying the triangle inequality, first for $L_{2p}$ and then for $L_2$,
\begin{equs} 
& \norm{\sum_k   \bigg(\int c_k^p dz\bigg)^{1/p}}_{L_{2p}(d\mu)}  \le
\sum_k \norm{\bigg(\int c_k^p dz\bigg)^{1/p}}_{L_{2p}(d\mu)} =
\sum_k\bigg(\;\BE_\mu\bigg(\int c_k^p dz\biggr)^2\;\biggr)^{1/(2p)}
\\ & \quad  =\sum_k\bigg(\norm{\int c_k^p}_{L_2(d\mu)}\;\biggr)^{1/p}
 \le \sum_k\bigg(\int \norm{c_k^p}_{L_2(d\mu)}dz\;\biggr)^{1/p}
 = \sum_k\bigg(\int\bigg( \BE_\mu c_k^{2p}\biggr)^{1/2}dz\biggr)^{1/p}\;.
\end{equs}
Substituting into~\eqref{e:firstmo} and applying Proposition~\ref{prop:tight}
to each term, we obtain
\begin{equs} \label{eq-W}
& \bigg(\BE_\mu\bigg(\sup_{s,t} \frac{\norm{W_\eps(s,t)}_{\CB}}{|t-s|^\gamma} \bigg)^{2p}\bigg)^{1/(2p)} 
\le  \bigg( \int\bigg(\BE_\mu \bigg(\sup_{s,t} \frac{| W_\eps(s,t; x)|}{|t-s|^{\gamma}}\bigg)^{2p}\Bigg)^{1/2} dz \bigg)^{1/p}  \\
& \qquad \qquad+ \bigg( \int\bigg(\BE_\mu \bigg(\sup_{s,t} \frac{|\Delta_\sigma ^m W_\eps(s,t; x)|}{|t-s|^{\gamma}}\bigg)^{2p}\Bigg)^{1/2} dz \bigg)^{1/p}  \\
& \qquad \lesssim
\bigg( \int \norm{b(x,\cdot)}_{C^\kappa}^p dz  \bigg)^{1/p} 
  + \bigg( \int \norm{\Delta^m_\sigma b(x,\cdot)}_{C^\kappa}^p dz \bigg)^{1/p} 
 \lesssim \norm{b}_{B_p^{\theta + d/p} ; C^\kappa}<\infty \;,
\end{equs}
where the last inequality follows from
Lemma \ref{lem:compact} (since $\theta+d/p<\alpha$.)
\par
We now use the same method to estimate the $\BBW_\eps$ term. Just as above, via Lemma \ref{lem:besov_embed} we have
\begin{equs}
& \sup_{s,t} \frac{\norm{\BBW_\eps(s,t)}_{\CB\otimes\CB}}{|t-s|^{2\gamma}}
=\sup_{s,t} \norm{\BBW_\eps(s,t ; \cdot,\cdot)}_{C^{\theta,\theta}}
\leq \bigg( \int  \bigg(\sup_{s,t} \frac{|\BBW_\eps(s,t; x , x')|}{|t-s|^{2\gamma}}\bigg)^p dz \bigg)^{1/p} \\ 
&+\bigg(\int  \bigg(\sup_{s,t} \frac{|\Delta^m_{x,\sigma} \BBW_\eps(s,t; x , x')|}{|t-s|^{2\gamma}}\bigg)^p dz  \bigg)^{1/p} 
+\bigg(\int  \bigg(\sup_{s,t} \frac{|\Delta^m_{x',\sigma'} \BBW_\eps(s,t; x , x')|}{|t-s|^{2\gamma}}\bigg)^p dz \bigg)^{1/p} \\ 
&+\bigg(\int  \bigg(\sup_{s,t} \frac{|\Delta^m_{x,\sigma}\Delta^m_{x',\sigma'} \BBW_\eps(s,t; x , x')|}{|t-s|^{2\gamma}}\bigg)^p dz \bigg)^{1/p}\;,
\end{equs}
where $dz$ is variously $dxdx'$, 
$|\sigma|^{-\theta p-2d}dxdx'd\sigma$,
$|\sigma'|^{-\theta p-2d}dxdx'd\sigma'$ or
$|\sigma\sigma'|^{-\theta p-2d}dxdx'd\sigma d\sigma'$.
We apply $\BE_\mu$ to the left hand side, using the triangle inequality to
take the $L_{1}$ norm inside the sums and integrals.
Applying Proposition~\ref{prop:tight}(b) to each term, we obtain
\begin{equs} \label{eq-WW}
  \BE_\mu & \sup_{s,t} \frac {\norm{\BBW_\eps(s,t)}_{\CB\otimes\CB}}{|t-s|^{2\gamma}}
  \lesssim \bigg(\int \norm{b(x,\cdot)}_{C^\kappa}^p \norm{b(x',\cdot)}_{C^\kappa}^p dz \bigg)^{1/p}
 \\ & +\bigg(\int  \norm{\Delta_{x,\sigma}^mb(x,\cdot)}_{C^\kappa}^p \norm{b(x',\cdot)}_{C^\kappa}^p dz \bigg)^{1/p}
  +\bigg(\int  \norm{b(x,\cdot)}_{C^\kappa}^p \norm{\Delta_{x',\sigma'}^{m'}b(x',\cdot)}_{C^\kappa}^p dz \bigg)^{1/p}
 \\ & +\bigg(\int \norm{\Delta_{x,\sigma}^mb(x,\cdot)}_{C^\kappa}^p \norm{\Delta_{x',\sigma'}^{m'}b(x',\cdot)}_{C^\kappa}^p dz \bigg)^{1/p}\;.
%
%
%
\\&=\bigg\{ \bigg(\int \norm{b(x,\cdot)}_{C^\kappa}^p dx \bigg)^{1/p} + \bigg( \int_{|\sigma|\leq 1} |\sigma|^{-\theta p-2d} \int \norm{\Delta^m_{x,\sigma} b (x,\cdot) }_{C^\kappa}^p dx d\sigma \bigg)^{1/p}  \bigg\} \\
& \times \bigg\{ \bigg(\int \norm{b(x',\cdot)}_{C^\kappa}^p dx' \bigg)^{1/p} + \bigg( \int_{|\sigma'|\leq 1} |\sigma'|^{-\theta p-2d} \int \norm{\Delta^m_{x',\sigma} b (z,\cdot) }_{C^\kappa}^p dx' d\sigma'  \bigg)^{1/p}  \bigg\} \\
&\lesssim \norm{b}_{B_p^{\theta + d/p} ; C^\kappa}^2<\infty\;,
\end{equs}
where the last inequality follows from Lemma \ref{lem:compact}. Combining~\eqref{eq-W} and~\eqref{eq-WW}, we obtain the required estimate 
for $\BW_\eps$.
\end{proof}

Finally, we have the claimed tightness result. 

\begin{cor} \label{cor:tight}
\begin{itemize}
\item[(a)] The family $(V_\eps,\BW_\eps)_{\eps>0}$ is tight
in $\CC^{\beta,\gamma}$
for any $\beta\in(\frac12,1)$, $\gamma \in(\frac13, \frac12~-~\frac{1}{2p})$.
\item[(b)] The family $(x_\eps)_{\eps>0}$ is tight
in $C([0,T],\R^d)$.
\end{itemize}
\end{cor}

\begin{proof}
We first show that $\BW_\eps = (W_\eps,\BBW_\eps)$ is tight in $\CC^\gamma$. 
Let $R>2$, $\gamma'\in(\gamma,\frac12-\frac{1}{2p})$, and let $B_R\subset \CC^\gamma$ be the ball of radius $R$ in the $\rho_{\gamma'}$ metric. 
By a standard Arzela-Ascoli argument (for instance, see \cite[Chapter 5]{friz10}) one can show that $B_R$ is sequentially compact with respect to $\rho_\gamma$ and hence compact in $\CC^\gamma$. 
Since $\rho_{\gamma'} (\BW_\eps,0) \leq \normy{\BW_\eps}_{\CC^{\gamma'}} + \normy{\BW_\eps}^{2}_{\CC^{\gamma'}} $ and $R>2$, 
\begin{equ}
\BPmnu \big( \BW_\eps \not\in B_R \big) \leq \BPmnu \big( \normy{\BW_\eps}_{\CC^{\gamma'}} \geq  (R/2)^{1/2} \big)\;.
\end{equ}
Hence by Markov's inequality  and Lemma~\ref{lem:tight},
\begin{equ}
\BPmnu \big( \BW_\eps \not\in B_R \big) \leq 
2^p \BE_\mu \normy{\BW_\eps}_{\CC^{\gamma'}}^{2p} / R^p
\lesssim
R^{-p}\;.
\end{equ}
This proves tightness of $\BW_\eps$.
An analogous, but simpler, argument using Lemma~\ref{lem:tightV}
shows that
$V_\eps$ is tight in $\CC^\beta$, concluding the proof of part (a). 

For part (b),
let $x_{\eps_k}$ be a subsequence.
By part (a), we can apply Prokhorov's theorem to $(V_{\eps_k},\BW_{\eps_k})$.  Hence
passing to a subsubsequence,
there exists $(V,\BW)\in \CC^{\beta,\gamma}$ such that $(V_{\eps_k},\BW_{\eps_k})\to_w (V,\BW)$ in the $\CC^{\beta,\gamma}$ topology. 
By Lemma~\ref{lem:xeps_rde} and
Corollary \ref{cor:rde}, $x_{\eps_k}\to_w  X$ in 
$C([0,T],\R^d)$ where $X$ satisfies the 
RDE~\eqref{e:inceps} driven by $(V,\BW)$.  It follows that 
$\{x_\eps\}_{\eps>0}$ is weakly precompact in $C([0,T],\R^d)$.
Since $C([0,T],\R^d)$ is Polish, we can apply Prokhorov's theorem to deduce
 that $\{x_\eps\}_{\eps > 0}$ is tight.
\end{proof}

\subsection{Characterization of limits of $(V_\eps,\BW_\eps)_{\eps>0}$ and $(x_\eps)_{\eps>0}$}
\label{s:limit}

We begin by describing the limit of $V_\eps$.

\begin{lemma} \label{lem:V}  
Define the deterministic element $V\in C^1([0,T],\CA)$ given by
$V(t) = \abar t$ where $\abar \in \CA$ is defined in~\eqref{e:Veq}.
Then
$V_\eps\to V$ in probability in $\CC^\beta$ 
for any $\beta\in(\frac12,1)$, 
\end{lemma}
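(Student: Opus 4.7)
The plan is to split the claim into two pieces: (i) a.s.\ uniform-in-time convergence $\sup_{t\in[0,T]}\|V_\eps(t)-V(t)\|_\CA\to 0$, obtained from Birkhoff's ergodic theorem together with an Arzel\`a--Ascoli compactness argument; and (ii) an interpolation with the Lipschitz-in-time bound from Lemma~\ref{lem:tightV} to upgrade to the $\CC^\beta$ norm. Since the limit $V$ is deterministic, a.s.\ convergence will automatically give convergence in probability.

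For step (i), fix $t\in[0,T]$, $x\in\R^d$ and a multi-index $k$ with $|k|\le 1$. Since $a\in C^{1+,0}$ has compact support in $x$, the function $y\mapsto D_x^k a(x,y)$ lies in $L^1(\Omega,\mu)$, and the substitution $s=r\eps^{-2}$ turns
\begin{equ}
D_x^k V_\eps(t)(x)=\eps^2\int_0^{t\eps^{-2}}D_x^k a(x,\phi_s y_0)\,ds
\end{equ}
into a Birkhoff ergodic average converging to $t\,D^k\bar a(x)$ for $\mu$-a.e.\ $y_0$. Taking countable unions of the associated null sets over a dense set $\{(t_n,x_m)\}\subset[0,T]\times\reals^d$ and over $|k|\le 1$, one obtains a single full-measure event on which all these pointwise convergences hold simultaneously. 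Writing $a\in C^{\alpha_0,0}$ for some $\alpha_0>1$ and choosing $\CA=C^{\alpha_1}$ with $1<\alpha_1<\alpha_0$, the family $\{V_\eps(t)\}_{\eps>0,\,t\in[0,T]}$ is uniformly bounded in $C^{\alpha_0}$ with common compact support, hence precompact in $\CA$ by Arzel\`a--Ascoli. Precompactness together with pointwise-in-$x$ convergence on $\{x_m\}$ identifies $V(t_n)$ as the unique $\CA$-limit of $V_\eps(t_n)$ for each $n$. The uniform Lipschitz bound $\|V_\eps(s,t)\|_\CA\lesssim|t-s|$ from Lemma~\ref{lem:tightV} (and the analogous bound for $V$) then bridges from the dense times $\{t_n\}$ to all $t\in[0,T]$, giving $\sup_{t}\|V_\eps(t)-V(t)\|_\CA\to 0$ a.s.

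For step (ii), set $g_\eps(t)=V_\eps(t)-V(t)$, so $\|g_\eps(s,t)\|_\CA\le 2K|t-s|$ uniformly in $\eps,s,t$ for some constant $K$. For any $\beta\in(\tfrac12,1)$ and any $0\le s<t\le T$, splitting $\|g_\eps(s,t)\|_\CA=\|g_\eps(s,t)\|_\CA^{1-\beta}\,\|g_\eps(s,t)\|_\CA^\beta$ and estimating the first factor by $2\sup_r\|g_\eps(r)\|_\CA$ and the second by $(2K|t-s|)^\beta$ yields
\begin{equ}
\frac{\|g_\eps(s,t)\|_\CA}{|t-s|^\beta}\le\bigl(2\sup_{r\in[0,T]}\|g_\eps(r)\|_\CA\bigr)^{1-\beta}(2K)^\beta\;.
\end{equ}
Combined with $g_\eps(0)=0$ and step (i), this gives $\|g_\eps\|_{\CC^\beta}\to 0$ a.s.

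The main obstacle is the compactness step in (i): promoting the pointwise-in-$x$ ergodic theorem to norm convergence in the infinite-dimensional Banach space $\CA$. The compact support of $a$ and the strict inequality $\alpha_0>\alpha_1$ are both essential; the rest is bookkeeping.
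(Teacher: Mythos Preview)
Your argument is correct and gives a stronger conclusion (almost-sure convergence in $\CC^\beta$) than the paper's, but it is genuinely different in strategy.

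The paper does not try to prove norm convergence of $V_\eps(t)$ in $\CA$ directly. Instead it combines two soft ingredients: (a) the tightness of $(V_\eps)_{\eps>0}$ in $\CC^\beta$ already established in Corollary~\ref{cor:tight}, and (b) identification of any subsequential weak limit via one-dimensional projections. Namely, for each $\pi\in L(\CA,\R)$ the ergodic theorem plus Lemma~\ref{lem-elem} gives $\pi V_\eps\to\pi V$ a.s.\ in $C([0,T],\R)$; then if some subsequence $V_{\eps_k}\to_w Z$ in $\CC^\beta$, one deduces $\pi Z(t_0)\stackrel{d}{=}\pi V(t_0)$ for all $\pi$ and $t_0$, forcing $Z=V$ a.s.\ by continuity. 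Since the limit is deterministic, weak convergence is the same as convergence in probability.

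Your route trades this ``tightness plus identification'' package for a hands-on compactness argument in $\CA$ itself: you exploit the compact support of $a$ and the strict inequality $\alpha_1<\alpha_0$ to get the compact embedding $C^{\alpha_0}\hookrightarrow\CA$, then pin down the limit pointwise on a countable dense set, and finally interpolate with Lemma~\ref{lem:tightV} to reach $\CC^\beta$. The payoff is a self-contained proof (you never invoke Corollary~\ref{cor:tight}) and an upgrade to almost-sure convergence. The cost is that the compact-support hypothesis and the freedom to choose $\CA=C^{\alpha_1}$ with $\alpha_1$ strictly below the regularity of $a$ are both load-bearing; the paper's argument, by contrast, would work for any fixed $\CA$ in which $V_\eps$ is tight, without needing a compact embedding from a smaller space.
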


\begin{proof}
Let $\pi\in L(\CB,\R)$.
Then $\pi V_\eps(t)=\eps^2\int_0^{t\eps^{-2}}(\pi a)\circ \phi_s\,ds$.
By ergodicity of $\mu$, it follows from the ergodic theorem that
$\pi V_\eps(1)\to\pi V(1)$ almost surely. 
By Lemma~\ref{lem-elem}, 
$\pi V_\eps\to\pi V$ almost surely, and hence in probability, in 
$C([0,T],\R)$.

Suppose for contradiction that $V_\eps$ fails to converge weakly to $V$ in $\CC^\beta$.
By Corollary~\ref{cor:tight}, the family $V_\eps$ is tight in $\CC^\beta$, so
there is a subsequence such that
$V_{\eps_k}\to_w Z$ in $\CC^\beta$ where the random 
process $Z$ differs from $V$.  In particular, 
$\pi V_{\eps_k}(t_0)\to_w \pi Z(t_0)$ in $\reals$ for any $\pi\in L(\CA,\reals)$ and any $t_0\in[0,T]$.
Hence $\pi Z(t_0)$ has the same distribution as 
$\pi V(t_0)$ and so $\BP\big(\pi Z(t_0)=\pi\bar a t_0\big)=1$.
Since $\pi$ is arbitrary, it follows that 
$\BP\big(Z(t_0)=\bar a t_0\big)=1$.
But $Z$ is continuous, so $Z=V$ with probability one, giving the desired 
contradiction.
\end{proof}

\begin{proof}[Proof of Theorem~\ref{thm:xeps_limit}]
We have shown in Corollary \ref{cor:tight} that $\{\xeps\}_{\eps>0}$ is tight.  Let $X$ be a limit point, with $x_{\eps_k}\to_w X$ in $C([0,T] ; \reals^d)$.
By Lemma~\ref{lem:tight}, we can pass to a subsubsequence for which
$(V_{\eps_k},\BW_{\eps_k})$ converges weakly in $\CC^{\beta,\gamma}$.
Denote the limit by $(V,\BW)$.  By Lemma~\ref{lem:xeps_rde} 
and Corollary~\ref{cor:rde}, $X$ solves an RDE of the form~\eqref{e:inceps}
driven by $(V,\BW)$.
By Lemma~\ref{lem:V}, $V(t)=\bar at$ completing the proof.
\end{proof}

Finally, we obtain a partial (see Remark~\ref{rmk-partial}) characterization 
of the limit points of $\BW_\eps$ in terms of 
 their finite dimensional distributions. 
For each fixed $\pi \in L( \CB,\reals^m)$,
let $(\BP^\pi,\Omega^\pi, \CF^\pi)$ be a probability space endowed with a filtration $\{\CF_t^\pi\}_{t\geq0}$ rich enough to support Brownian motion. We define a stochastic process $(B_\pi,\BBB_\pi) : [0,T] \to \reals^{m}\times\reals^{m\times m}$ on the probability space $(\BP^\pi,\Omega^\pi, \CF^\pi)$, where $B_\pi$ is a $\reals^m$-valued $\CF^\pi_t$ - Brownian motion with covariance
\begin{equ}\label{e:Bpi_cov}
\BE^\pi B_\pi^i(1)B_\pi^j(1) = \auto(\pi^i b , \pi^j b ) + \auto(\pi^j b , \pi^i b ) 
\end{equ}
and $\BBB_\pi$ is defined by
\begin{equ}\label{e:Bpi_ito}
\BBB_\pi^{ij}(t) = \int_0^t B_\pi^i  dB_\pi^j + \auto(\pi^i b , \pi^j b) t
\end{equ}
where the integral is of It\^o type. Notice that this is precisely the structure that arises under Assumption \ref{ass:clt}.

\begin{rmk}
Here $\pi^i b$ denotes the observable $y \mapsto \pi^i b(\cdot,y)$, with $\pi^i$ acting on $b$ as a function of $x$. By the regularity assumptions on $b$, it is easy to check that $\pi^i b\in C^\kappa_0(\Omega,\reals)$ and lies in the domain of $\auto$ (this calculation is done explicitly in Lemma \ref{lem:fdd}). Moreover, by Proposition \ref{prop:auto}, the covariance matrix of $B_\pi$ is a symmetric, positive semi-definite matrix. 
This guarantees existence of the
Brownian motion $B_\pi$ and hence the pair $(B_\pi ,\BBB_\pi)$.
\end{rmk}
\par
For $\pi \in L(\CB,\reals^m)$ we define $\pi \otimes \pi \in L(\CB,\reals^{m\times m})$ by $(\pi\otimes \pi)^{ij} = \pi^i \otimes \pi^j$, where $\pi^i \otimes \pi^j$ is (as usual) the dual tensor product. 

\begin{lemma}\label{lem:fdd}
Let 
$\pi \in L (\CB , \reals^m)$ for some $m \in \naturals$.
As $\eps\to0$,
\begin{equ}
(\pi W_\eps , (\pi \otimes \pi) \BBW_\eps ) \to(B_\pi , \BBB_\pi)
\end{equ}
in the sense of finite dimensional distributions of stochastic processes.
\end{lemma}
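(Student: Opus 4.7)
The plan is to identify $(\pi W_\eps, (\pi\otimes\pi)\BBW_\eps)$ with the scalar-level objects $(W_v^{(\eps)}, \BBW_v^{(\eps)})$ of Proposition~\ref{prop-neps}, where $v: \Omega \to \reals^m$ is defined by $v^i(y) = \pi^i(b(\cdot, y))$, and then to invoke that proposition directly.

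First I would verify that $v \in C_0^\kappa(\Omega, \reals^m)$. H\"older continuity follows from boundedness of $\pi^i \in L(\CB, \reals)$ combined with the fact that $y \mapsto b(\cdot, y)$ is $\kappa$-H\"older as a map $\Omega \to \CB = C^\theta(\reals^d, \reals^d)$; this uses $\alpha > \theta$ to ensure that $\norm{b}_{C^{\theta,\kappa}}$ is controlled by $\norm{b}_{C^{\alpha,\kappa}}$. The mean-zero property uses that $\pi^i$ commutes with integration against $\mnu$ (by linearity and continuity), together with the fact that $\int_\Omega b(\cdot, y)\,d\mnu(y)$ vanishes pointwise in $x$ and is therefore the zero element of $\CB$.

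Next, using linearity and continuity of $\pi^i$ to move it inside the Riemann integral, I obtain
\begin{equ}
(\pi W_\eps)^i(t) = \eps^{-1}\int_0^t v^i(\yeps(r))\,dr,
\end{equ}
and the substitution $r \mapsto r\eps^2$ together with $\yeps(r) = \phi_{r\eps^{-2}} y_0$ rewrites this as $W_v^{(\eps),i}(t)$ in the notation of Proposition~\ref{prop-neps}. For the iterated term, admissibility of the tensor product norm (Corollary~\ref{cor:tensor}) ensures that $\pi^i \otimes \pi^j$ extends to a bounded linear functional on $\CB \otimes \CB$ and acts by
\begin{equ}
(\pi^i \otimes \pi^j)\bigl(b(\cdot, \yeps(u)) \otimes b(\cdot, \yeps(r))\bigr) = v^i(\yeps(u))\, v^j(\yeps(r)),
\end{equ}
so the same change of variables yields $((\pi \otimes \pi)\BBW_\eps)^{ij}(t) = \BBW_v^{(\eps),ij}(t)$.

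Finally, applying Proposition~\ref{prop-neps} to $v$ gives $(W_v^{(\eps)}, \BBW_v^{(\eps)}) \to_w (W_v, \BBW_v)$ in the supnorm topology, hence in finite-dimensional distributions. By Assumption~\ref{ass:clt}, $W_v$ is a Brownian motion in $\reals^m$ with $\BBW_v^{ij}(t) = \int_0^t W_v^i\, dW_v^j + \auto(v^i, v^j)\,t$, and by Proposition~\ref{prop:auto}(a) its covariance equals $\auto(v^i, v^j) + \auto(v^j, v^i) = \auto(\pi^i b, \pi^j b) + \auto(\pi^j b, \pi^i b)$, matching~\eqref{e:Bpi_cov}. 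Hence $W_v$ and $B_\pi$ have the same distribution, and comparing~\eqref{e:Bpi_ito} with the defining formula for $\BBW_v$ yields $(W_v, \BBW_v) \stackrel{d}{=} (B_\pi, \BBB_\pi)$. The only mildly technical step is pushing $\pi \otimes \pi$ inside the double Riemann integral, and this is handled cleanly by admissibility.
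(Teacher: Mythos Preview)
Your proposal is correct and follows essentially the same route as the paper: define $v=\pi b$, verify $v\in C_0^\kappa(\Omega,\reals^m)$ via boundedness of $\pi$ and the $C^{\theta,\kappa}$ regularity of $b$, identify $(\pi W_\eps,(\pi\otimes\pi)\BBW_\eps)$ with $(W_v^{(\eps)},\BBW_v^{(\eps)})$, and then invoke Proposition~\ref{prop-neps}. You are simply more explicit than the paper about the mean-zero step, the admissibility justification for moving $\pi\otimes\pi$ inside the double integral, and the identification of $(W_v,\BBW_v)$ with $(B_\pi,\BBB_\pi)$.
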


\begin{proof} 
we have
\begin{equ}
(\pi W_\eps , (\pi \otimes \pi)\BBW_\eps)(t) = \bigg( \eps^{-1} \int_0^t (\pi b)(\yeps(r))dr ,  \eps^{-2} \int_0^t \int_0^r (\pi b)(\yeps(u))\otimes (\pi b)(\yeps(r)) dudr \bigg)\;.
\end{equ}
Now 
\begin{equs}
|(\pi b)(y) - (\pi b) (z)|  & = |\pi (b(\cdot,y)- b(\cdot,z))| \leq 
\norm{\pi}_{L(\CB,\R^m)}\norm{b(\cdot,y)-b(\cdot,z)}_{\CB}
\\ &  \leq \norm{\pi}_{L(\CB,\R^m)}\norm{b}_{C^{\theta,\kappa}} |y-z|^\kappa \;.
\end{equs}
Similarly, $|(\pi b)(y)|\leq
\norm{\pi}_{L(\CB,\R^m)}\norm{b}_{C^{\theta,0}}$.
Hence $\pi b\in C_0^\kappa(\Omega,\R^m)$ and the desired convergence 
follows from Proposition~\ref{prop-neps}.
\end{proof}

%
%

%

\begin{rmk}
Clearly, we can equally characterize the distribution of $(\pi_1 W , (\pi_2 \otimes \pi_3) \BBW)$ using this result, where each $\pi_i : \CB \to\reals^{m_i}$. Simply set $\pi = (\pi_1, \pi_2 , \pi_3)$ and then project out the unnecessary
components.  
\end{rmk}

\begin{rmk} \label{rmk-partial}
It would be natural to combine the tightness of  $\{\BW_\eps\}_{\eps > 0}$ with the convergence of finite dimensional distributions of $\BW_\eps$ obtained in Lemma \ref{lem:fdd} to obtain a weak limit theorem for $\{\BW_\eps\}_{\eps > 0}$. We avoid this here since showing that the finite dimensional distributions from Lemma \ref{lem:fdd} actually separate measures on $\CC^\gamma$ is a non-trivial task. Moreover, we gain nothing by doing so since, as shown in
Lemma~\ref{lem:rde_ito} below, all limit points $X$ agree.   
\end{rmk}

\section{Characterizing the RDE as a Diffusion}\label{s:rde_ito}

In this section we complete the proof of Theorem \ref{thm:fastslow_besov}.
The final ingredient is the following.

\begin{lemma}\label{lem:rde_ito}
Let $\BW$ be any limit point of $\{ \BW_\eps\}_{\eps > 0}$ and let $X$ be the solution to the RDE~\eqref{e:lemma_RDE} driven by $\BW$. Then $X$ is a weak solution to the SDE \eqref{e:sde}. 
\end{lemma}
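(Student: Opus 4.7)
The plan is to recognize the rough integral appearing in the RDE \eqref{e:lemma_RDE} as an It\^o stochastic integral against a Brownian motion plus an explicit drift correction coming from the non-geometric part of $\BBW$. Two ingredients will be used repeatedly. First, since $\BW_\eps$ is smooth (hence in $\CC^\gamma_g$) for each $\eps>0$ and $\CC^\gamma_g$ is closed under $\rho_\gamma$, every limit point $\BW=(W,\BBW)$ lies in $\CC^\gamma_g$. Second, Lemma \ref{lem:fdd} identifies the law of $(\pi W,(\pi\otimes\pi)\BBW)$ for every deterministic $\pi\in L(\CB,\R^m)$: $\pi W$ is a Brownian motion with covariance $\auto(\pi^ib,\pi^jb)+\auto(\pi^jb,\pi^ib)$, and $(\pi\otimes\pi)\BBW$ equals its iterated It\^o integral plus the drift $\auto(\pi^ib,\pi^jb)\,t$.

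I would then analyze the compensated Riemann sum \eqref{e:dW_intS} defining $\int_0^t H^i(X)\,d\BW$, using $X'_k=H^k(X)$ from Proposition \ref{prop-X'soln}. The sum has a principal part $\sum_n H^i(X(t_n))W(t_n,t_{n+1})$ and a Gubinelli correction
\[
\sum_n\sum_k \bigl(H^k(X(t_n))\otimes\del_kH^i(X(t_n))\bigr)\BBW(t_n,t_{n+1}).
\]
Applying Lemma \ref{lem:fdd} (noting that $H(x)$ sends $b$ to $b(x,\cdot)\in C_0^\kappa(\Omega,\R^d)$ and similarly for $\del_kH(x)$, by the regularity of $b$), the $\BBW$-increment splits into an iterated-It\^o part and an $\auto\,t$ piece. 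The $\auto\,t$ piece contributes precisely the Riemann sum for $\int_0^t\sum_k\auto(b^k(X,\cdot),\del_kb^i(X,\cdot))\,dr$, whose convergence follows from continuity of $\auto$ (Proposition \ref{prop:auto}(c)) and the H\"older regularity of $b$ in $x$. The principal term combined with the iterated-It\^o correction produces the rough integral of $H^i(X)$ against the canonical It\^o lift of $W$, which is the It\^o stochastic integral $\int_0^t H^i(X)\,dW$. Using Lemma \ref{lem:fdd} with $\pi=H(x)$ to identify the covariance structure, a standard martingale-representation argument on an enlarged probability space yields a Brownian motion $B$ such that $\int_0^t H^i(X)\,dW=\int_0^t\sigma^{ij}(X)\,dB^j$ with $(\sigma\sigma^T)^{ij}(x)=\auto(b^i(x,\cdot),b^j(x,\cdot))+\auto(b^j(x,\cdot),b^i(x,\cdot))$. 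Adding the deterministic drift $\int_0^t F^i(X)\bar a\,dr=\int_0^t\int a^i(X,y)\,d\mu(y)\,dr$ reproduces exactly the SDE \eqref{e:sde}, so $X$ is a weak solution.

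The main obstacle is that the projections $H(X(t_n))$ and $\del_kH(X(t_n))$ appearing in the Riemann sum depend on the random trajectory $X$, whereas Lemma \ref{lem:fdd} is stated only for deterministic $\pi$. To bridge the gap, I would approximate these random evaluations by a deterministic multi-evaluation on a fine spatial grid $\{x_m\}\subset\R^d$, controlling the substitution error uniformly in $\eps$ using the Lipschitz regularity of $b$ and $\del_kb$ in $x$ afforded by the assumption $\alpha>2+d/p+2/(p-1)$ together with the compact-support hypothesis inherited from Theorem \ref{thm:fastslow_besov}. Lemma \ref{lem:fdd} then applies directly to the grid-based $\pi$, and a joint limit as both the time mesh and the spatial grid refine completes the identification.
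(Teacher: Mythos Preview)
Your approach is viable in principle but takes a genuinely different route from the paper, and the sketch contains a step that is not justified as written. The paper does \emph{not} attempt to identify $\int H^i(X)\,d\BW$ as an It\^o integral against some $\CB$-valued Brownian motion; instead it uses the martingale-problem characterization of weak solutions. After applying the chain rule for geometric rough paths (Proposition~\ref{prop:CR}, using $\BW\in\CC^\gamma_g$ as you observed), it suffices to show that the compensated Riemann sums $S_\Delta$ for $\int\partial_i\vphi(X)H^i(X)\,d\BW$, minus the $\auto$-drift, form an $L^2$-bounded discrete $\CF_{t_n}$-martingale. The random-projection issue you flag is dealt with simply by \emph{conditioning}: since $X(t_n)$ is $\CF_{t_n}$-measurable, the projection $\pi(X(t_n))$ is frozen under $\BE(\cdot\mid\CF_{t_n})$, and Lemma~\ref{lem:fdd} applies directly to compute the conditional expectation of each increment. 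No spatial grid, no martingale representation, and no joint mesh limit are needed.

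The gap in your version is the sentence ``the $\BBW$-increment splits into an iterated-It\^o part and an $\auto\,t$ piece'', stated \emph{before} the grid is introduced. Lemma~\ref{lem:fdd} gives only equality in law of finite-dimensional projections; it does not furnish a pathwise decomposition of $\BBW$ itself, nor does it establish that $W$ is a $\CB$-valued Brownian motion admitting a canonical It\^o lift in $\CC^\gamma$. Indeed, Remark~\ref{rmk-partial} says the paper deliberately avoids characterizing the law of the limit $\BW$, because showing that these finite-dimensional distributions separate measures on $\CC^\gamma$ is non-trivial. Your grid idea repairs this: once $\pi$ is a fixed finite-dimensional evaluation map, Lemma~\ref{lem:fdd} does yield $(\pi\otimes\pi)\BBW=\int(\pi W)\otimes d(\pi W)+\auto\,t$ almost surely (the It\^o integral being a measurable functional of the Brownian path $\pi W$), and then the rough integral against the It\^o lift coincides with the It\^o integral by standard results. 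But you must carry out the grid approximation \emph{first}, control the substitution error in the rough integral uniformly in the time partition (not ``in $\eps$''; there is no $\eps$ left), and then perform the martingale-representation and grid-limit steps. All of this is avoidable via the martingale problem.
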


\begin{proof}
Let $\vphi : \reals^d \to \reals$ be a smooth function and let $\gen$ be the generator of the SDE \eqref{e:sde}, given by
\begin{equ}
\gen \vphi (x) = \sum_{i=1}^d {\atilde}^i(x) \del_i \vphi (x) + \sum_{i,j=1}^d \frac{1}{2}(\sigma \sigma^T)^{ij}(x)\del^2_{ij}\vphi(x) 
\end{equ}
with
\begin{equ}
\atilde^i(x) = \int_\Omega a^i(x,y)d\mu(y)  + \sum_{k=1}^d \auto(b^k(x,\cdot) , \del_k b^i(x,\cdot)) 
\end{equ}
and
\begin{equ}
(\sigma \sigma^T)^{ij}(x) = \auto(b^i(x,\cdot) , b^j(x,\cdot)) + \auto(b^j(x,\cdot) , b^i(x,\cdot))\;.
\end{equ}
By Proposition \ref{prop:auto}(b), $\sigma \sigma^T(x)$ is symmetric positive semidefinite, for each $x\in\reals^d$. Thus, by \cite[Theorem 4.5.2]{varadhan06} (or more precisely, \cite[Theorem 5.3.3]{kurtz86}) it is sufficient to show that $X$ solves the martingale problem associated with $\gen$. 
Specifically, 
let $\{\CF_t\}_{t\geq 0}$ be the filtration generated by the random variable $\BW : [0,T] \to \CB \times (\CB \otimes \CB)$. 
We must show that 
\begin{equ}
\vphi (X(t)) - \vphi(X(s)) - \int_s^t \mathcal{\gen} \vphi (X(r)) dr
\end{equ}
is an $\CF_t$-martingale. 
\par
Since $\BW \in \CC^\gamma_g$ it follows from the chain rule for RDEs,
Proposition~\ref{prop:CR}, that
\begin{equs} \label{eq-CR}
\vphi(X(t)) = \vphi(X(s)) &+\sum_{i=1}^d \int_s^t \del_i \vphi(X(r))\int_\Omega a^i(X(r),y)d\mu(y) dr \\
&+ \sum_{i=1}^d\int_s^t \del_i \vphi(X)H^i(X)d\BW \;,
\end{equs}
with the equality holding pathwise, 
where we have used the identity 
$F(x)\bar a = \int_\Omega a(x,y)d\mu(y)$. 
Using \eqref{eq-CR} together with the ``divergence-form'' of $\gen$,
\begin{equ}
\gen \vphi (x) = \sum_{i=1}^d \int_\Omega a^i(x,y) d\mu(y)\del_i\vphi(x) 
+\sum_{i,k=1}^d \auto\big(b^k(x,\cdot),\del_k\{b^i(x,\cdot)\del_i\vphi(x)\}\big)\;,
\end{equ} 
we reduce to showing that
for each $i = 1,\dots,d$,
\begin{equ}\label{e:integ_claim}
\BE ( S | \CF_s )= \sum_{k=1}^d \BE \bigg( \int_s^t G_k(X(r)) dr \,\big|\, \CF_s \bigg) 
\end{equ}
for all $s\leq t \leq T$, where
\begin{equs}
& S  = 
\int_s^t \del_i \vphi(X)H^i(X)d\BW \;, \\
& G_k(x)  = 
\auto\big(b^k(x,\cdot),\del_k\{b^i(x,\cdot)\del_i\vphi(x)\}\big)\;.
\end{equs}
\par
By definition of the rough integral in~\eqref{e:dW_intS}, using
Proposition~\ref{prop-X'soln}, we see that
$S= \lim_{\Delta \to 0} S_\Delta$
where the limit is defined pathwise and 
\begin{equs}
S_{\Delta} = 
\sum_{[t_n,t_{n+1}] \in \Delta}  
&\del_i \vphi(X(t_{n}))H^i(X(t_{n}))W(t_{n},t_{n+1}) \\ &+ \big(H^k(X(t_{n})) \otimes  \del_k \{\del_i \vphi(X(t_{n}))H^i(X(t_{n}))\}\big) \BBW(t_{n},t_{n+1})
\end{equs} 
and $\Delta = \{ [t_n,t_{n+1}] : 0\leq n \leq N-1 \}$ denotes partitions of $[s,t]$. In the second term, we omit the sum over $k=1,\dots, d$ for brevity.


Next, we define 
\begin{equs}
M_\Delta = S_\Delta - \sum_{[t_n,t_{n+1}] \in \Delta}  G_k(X(t_n)) \Delta t_n\;,
\end{equs}
where $\Delta t_n=t_{n+1}-t_n$.  
It follows directly from the regularity of $b,X,\vphi$ that the map
\begin{equ}
t \mapsto \big( b^k(X(t),\cdot) , \del_k \{b^i(X(t),\cdot) \del_i \vphi(X(t))\} \big) \;\;,\;\; [0,T] \to C^\kappa(\Omega) \times C^\kappa(\Omega)
\end{equ}
is continuous.
By Proposition~\ref{prop:auto}(c), $t\mapsto G_k(X(t))$
is continuous and hence Riemann integrable.
%
In particular, $\lim_{\Delta\to0}(S_\Delta - M_\Delta)=
\int_s^t G_k(X(r))dr$
almost surely.  Hence, 
 \begin{equs}
 \BE (S | \CF_s ) & = \BE ( \lim_{\Delta \to 0} M_\Delta | \CF_s ) + \BE ( \lim_{\Delta \to 0} (S_\Delta - M_\Delta ) | \CF_s  ) \\
 & = \BE ( \lim_{\Delta \to 0} M_\Delta | \CF_s ) 
+ \BE \bigg( \int_s^t G_k(X(r)) dr  \,\big|\, \CF_s \bigg)\;.
 \end{equs}
\par
Thus proving~\eqref{e:integ_claim} reduces to showing that $\BE ( \lim_{\Delta \to 0} M_\Delta | \CF_s )  =0$. 
We claim that $M_\Delta$ is square integrable uniformly in $|\Delta|\le 1$ and that $\BE (M_\Delta | \CF_s) = 0$ for each $|\Delta| \leq 1$.   
Then
by convergence of first moments,
$\BE ( \lim_{|\Delta|\to0 }  M_\Delta | \CF_s)= \lim_{|\Delta|\to0 } \BE (M_\Delta | \CF_s) = 0$,
completing the proof.

It remains to verify the claim.
For each $x\in \reals^d$ let us define the projection $\pi(x)=(\pi_1(x),\pi_2(x),\pi_3(x)):
\CB\to\reals^3$ by
\begin{equ}
\pi_1(x) = \del_i \vphi(x)H^i(x) ,   \quad
\pi_2(x) = H^k(x) , \quad
\pi_3(x) =  \del_k \{\del_i \vphi(x)H^i(x)\}.
\end{equ}
Then
\begin{equ}
M_{\Delta} = \sum_{[t_n,t_{n+1}] \in \Delta}  
(M_\Delta^{n+1}- M_\Delta^n)
\end{equ}
where
\begin{equ}
M_\Delta^{n+1}=
M_\Delta^n+
\pi_1(X(t_n)) W(t_{n},t_{n+1}) + \big(\pi_2(X(t_n)) \otimes \pi_3(X(t_n))\big)  \BBW(t_{n},t_{n+1}) - G_k(X(t_n))\Delta t_n.
\end{equ} 
Note that 
\begin{equs} \label{e:pi}
& \sup_x\norm{\pi_1(x)b}_{C^\kappa}\lesssim \norm{b}_{C^{0,\kappa}}\;, \quad
\sup_x\norm{\pi_2(x)b}_{C^\kappa}\lesssim \norm{b}_{C^{0,\kappa}}\;, \\ &
\sup_x\norm{\pi_3(x)b}_{C^\kappa}\lesssim \norm{b}_{C^{1,\kappa}}\;.
\end{equs}
Also, by Lemma \ref{lem:fdd}, 
\begin{equ}
(\pi_1(x) W, (\pi_2(x) \otimes \pi_3(x)) \BBW) \edist (B_{\pi(x)}^1,\BBB_{\pi(x)}^{2,3}) 
\end{equ}
where $B_{\pi(x)} = (B_{\pi(x)}^1,B_{\pi(x)}^2,B_{\pi(x)}^3)$ is an $\CF^{\pi(x)}_t$-Brownian motion in $\reals^3$ with covariance defined by \eqref{e:Bpi_cov} and where $\BBB_{\pi(x)}$ is the corrected It\^o integral defined by \eqref{e:Bpi_ito}. 

Define the filtration $F_n = \CF_{t_n}$ for $n=0,\dots,N$
and set $F_n^\pi=\CF_{t_n}^{\pi(X(t_n))}$. 
Let $\BE^\pi_n(\cdot|F_n^\pi)$ denote conditional
expectations with respect to 
$\BP^{\pi(X(t_n))}$.
In particular, 
\begin{equ}
\BE(M_\Delta^{n+1} - M_\Delta^n|\CF_n)   =  \BE^\pi_n\big(B_{\pi(X(t_n))}^1(t_{n},t_{n+1})+ \BBB_{\pi(X(t_n))}^{2,3}(t_{n},t_{n+1})|F_n^\pi\big) 
-  \BE(G_k(X(t_n))\Delta t_n|\CF_n) \;.
\end{equ}
Since $B_{\pi(x)}$ and the It\^o integral part of $\BBB_{\pi(x)}$ are martingales for all $x\in\R^d$,
\begin{equs}
\BE ( M^{n+1}_\Delta - M^{n}_\Delta | F_n  ) &= \BE_n^\pi(\auto(\pi^2(X(t_n))b,\pi^3(X(t_n))b)\Delta t_n\,|\,F_n^\pi)-
\BE( G_k(X(t_n))\Delta t_n \,|\, F_n ) =0
\end{equs} 
where the last equality follows from the identity
\begin{equ}
\auto(\pi^2(x) b , \pi^3(x) b) = \auto \big( b^k(x,\cdot) , \del_k \{b^i(x,\cdot) \del_i \vphi(x)\} \big)=G_k(x)\;.
\end{equ}
Thus $M_{\Delta}^n$ is an $F_n$-martingale. Moreover, 
\begin{equs} 
 & \BE \big( (M_\Delta^{n+1} - M_\Delta^n)^2 | F_n\big)
 \\ 
& \quad
= \BE^\pi_n \bigg( \bigg(  B_{\pi(X(t_n))}^1(t_n,t_{n+1}) +  \int_{t_n}^{t_{n+1}} B_{\pi(X(t_n))}^2(t_n,r) dB_{\pi(X(t_n))}^3(r)   \bigg)^2 \,\bigg|\, F^\pi_n \bigg)\\
& \quad
\lesssim  \BE^\pi_n \big( (  B_{\pi(X(t_n))}^1(t_n,t_{n+1}))^2 | F^\pi_n \big) +  \BE^\pi_n \bigg( \bigg( \int_{t_n}^{t_{n+1}} B_{\pi(X(t_n))}^2(t_n,r) dB_{\pi(X(t_n))}^3(r)\bigg)^2 \,\bigg|\, F^\pi_n \bigg)\;.
\end{equs}
But by definition of $B_{\pi(x)}$ we have
\begin{equ}
\BE^\pi_n ( (B^1_{\pi(X(t_n))}(t_n,t_{n+1}))^2 | F^\pi_n ) = 2 \diffu( \pi_1(X(t_n)) b , \pi_1(X(t_n)) b ) \Delta t_n
\end{equ}
where we use the shorthand $\diffu(v,w) =\frac12( \auto(v,w) + \auto(w,v))$. 
By Proposition~\ref{prop:auto}(c) and~\eqref{e:pi},
\begin{equ}
\BE^\pi_n ( (B^1_{\pi(X(t_n))}(t_n,t_{n+1}))^2 | F^\pi_n ) \lesssim \sup_x \norm{\pi_1(x) b}_{C^\kappa(\Omega,\reals)}^2 \Delta t_n \lesssim\norm{b}_{C^{1,\kappa}}^2\Delta t_n\lesssim \Delta t_n.
\end{equ}
Also, by the It\^o isometry, again using
Proposition~\ref{prop:auto}(c),
\begin{equs}
& \BE^\pi_n  \bigg(\bigg(\int_{t_n}^{t_{n+1}} B_{\pi(X(t_n))}^2(t_n,r) dB_{\pi(X(t_n))}^3(r)\bigg)^2 | F^\pi_n \bigg)\\
& \qquad =  \big(\diffu(\pi_2(X(t_n)) b, \pi_3(X(t_n)) b) \big)^2 \int_{t_n}^{t_{n+1}} (r-t_n) dr \lesssim 
\norm{b}_{C^{1,\kappa}}^2(\Delta t_n)^2\lesssim |\Delta|\Delta t_n \;.
\end{equs}
It follows that for $|\Delta|\le1$,
\begin{equ}
 \BE \big( (M_\Delta^{n+1} - M_\Delta^n)^2 | F_n\big) \lesssim \Delta t_n\;.
\end{equ}
In particular, $\{ M_\Delta^n\}_{n=0}^N$ is an $L^2$-martingale, with $L^2$ norm bounded uniformly in $|\Delta| \leq 1$. 
Moreover $M_\Delta = M^N_\Delta$, so this completes the verification of the claim.
\end{proof}

\begin{proof}[Proof of Theorem \ref{thm:fastslow_besov}]
By Theorem \ref{thm:xeps_limit}, we see that $x_\eps \to_w X$ along subsubsequences where $X$ solves the RDE \eqref{e:lemma_RDE}. By Lemma \ref{lem:rde_ito}, $X$ is a weak solution to the SDE \eqref{e:sde}. In particular all subsequences converge to the same limit. The formula for $\auto(v,w)$ follows easily by taking $\BE_\mu$ in Assumption \ref{ass:clt} and applying Assumption \ref{ass:moments} to obtain convergence of the mean. This completes the proof. 
\end{proof}

\section{Localization}\label{s:localization}

In this section, we lift the localized convergence result Theorem \ref{thm:fastslow_besov} to the full convergence result Theorem \ref{thm:abstract}. 

Let $\eta_R : \reals^d \to [0,1]$ be a smooth cutoff function with
\begin{equ}
\eta_R(x) = \begin{cases} 1 \quad&\text{ for $|x| \le R$}\\ 0 \quad&\text{for $|x| \ge 2R$}
\end{cases}\;.
\end{equ}
Let $a,b$ satisfy the assumptions of Theorem \ref{thm:abstract} and define $a_R(x,y) = a(x,y) \eta_R(x) $ , $b_R(x,y) = b(x,y) \eta_R(x) $. Clearly $a_R, b_R$ satisfy all the requirements of Theorem \ref{thm:fastslow_besov}. In particular, if we let $x_{\eps,R}$ denote the solution to \eqref{e:fastslow} with $a,b$ replaced by $a_R,b_R$ then Theorem \ref{thm:fastslow_besov} states that $\xepsR \to_w X_R$ where $X_R$ satisfies the SDE \eqref{e:sde} with $a,b$ replaced with $a_R, b_R$. The following result is the final ingredient required to complete the localization argument. 
%
%
%
%

\begin{lemma}\label{lem:XR_lim}
Let $X_R$ be the It\^o diffusion defined by
\begin{equ}
dX_R = \ahat_R(X_R)dt + \sigma_R(X_R)dB \quad , \quad X_R(0) = \xi\;,
\end{equ}
where the drift and diffusion coefficients $\ahat_R:\reals^d\to\reals^d$ and $\sigma_R : \reals^d \to \reals^{d\times d}$ are given by
\begin{equ}
\ahat_R^i(x) = \int a_R^i(x,y)d\mu(y) +  \sum_{k=1}^d \auto(b_R^k(x,\cdot),\del_k b_R^i(x,\cdot))  \quad, \quad i=1,\dots,d\;, 
\end{equ}
\begin{equ}
(\sigma_R (x) \sigma_R^T(x))^{ij} = \auto(b_R^i(x,\cdot),b_R^j(x,\cdot)) + \auto(b_R^j(x,\cdot),b_R^i(x,\cdot)) \quad, \quad  
i,j =1, \dots, d \;.
 \end{equ}
%
%
Then $X_R \to_w X$ in the supnorm topology, as $R\to \infty$.
\end{lemma}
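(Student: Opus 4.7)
The plan is to use standard SDE localization: the truncation only modifies $a$ and $b$ outside the ball of radius $R$, so $\ahat_R(x) = \ahat(x)$ and $\sigma_R(x) = \sigma(x)$ whenever $|x| \le R$. Since Theorem~\ref{thm:axioma}(ii) guarantees that $\ahat, \sigma$ are globally Lipschitz, a unique strong solution $X$ of the SDE $dX = \ahat(X)dt + \sigma(X)dB$, $X(0)=\xi$, exists on $[0,T]$, and similarly for $X_R$.

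I would then realize $X$ and $X_R$ on a common probability space by driving both SDEs with the same Brownian motion $B$. Setting $\tau_R = \inf\{t \ge 0 : |X(t)| \ge R\}$, the stopped process $X^{\tau_R}$ solves the truncated SDE up to $\tau_R$, because the coefficients agree on the closed ball of radius $R$. By pathwise uniqueness for the truncated SDE with Lipschitz coefficients $\ahat_R, \sigma_R$, it follows that $X_R(t) = X(t)$ for all $t \in [0, \tau_R \wedge T]$ almost surely; in particular, $X_R \equiv X$ on $[0,T]$ on the event $\{\tau_R > T\}$.

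For any bounded continuous $f : C([0,T], \reals^d) \to \reals$ we therefore have
\begin{equation*}
|\BE f(X_R) - \BE f(X)| \le 2\|f\|_\infty\, \BP(\tau_R \le T) = 2\|f\|_\infty\,\BP\!\left(\sup_{t \le T}|X(t)| \ge R\right),
\end{equation*}
which tends to $0$ as $R \to \infty$ by almost-sure continuity of the trajectories of $X$ on the compact interval $[0,T]$. This yields $X_R \to_w X$ in the supnorm topology.

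The only potential subtlety is the global Lipschitz (and hence non-explosion) property of $\ahat, \sigma$, but this is already supplied by Theorem~\ref{thm:axioma}(ii) together with the uniform bound $|\auto(v,w)| \lesssim \norm{v}_{C^\kappa}\norm{w}_{C^\kappa}$ from Proposition~\ref{prop:auto}(c); once these are in hand, the coupling-and-localization argument is entirely routine, and there is no real obstacle.
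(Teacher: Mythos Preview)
Your argument is correct, but it takes a different route from the paper's. The paper works at the level of the martingale problem: it first checks that the martingale problem for $X$ is well posed via the Lipschitz estimate (from Proposition~\ref{prop:auto}(c) and the regularity of $a,b$), and then invokes a black-box convergence criterion from Stroock--Varadhan stating that uniform convergence of the coefficients on compact sets implies weak convergence of the associated diffusions. Since $\ahat_R=\ahat$ and $\sigma_R\sigma_R^T=\sigma\sigma^T$ on $\{|x|\le R\}$, compact convergence is immediate.

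Your approach instead uses strong coupling: realize $X$ and $X_R$ with the same Brownian motion, invoke pathwise uniqueness to identify them up to the exit time $\tau_R$, and let $R\to\infty$. This is more elementary and self-contained, and in fact yields the slightly stronger statement that $X_R$ converges to $X$ in probability (even almost surely along the coupling). One small point to make explicit: the lemma only prescribes $\sigma_R\sigma_R^T$, so ``$\sigma_R(x)=\sigma(x)$ for $|x|\le R$'' requires choosing the square root $\sigma_R$ compatibly with $\sigma$ on that ball; this is harmless since the law of $X_R$ is independent of the choice of square root. Also, citing Theorem~\ref{thm:axioma}(ii) for the Lipschitz property is mildly circular in the logical flow of the paper; the cleaner justification is directly via Proposition~\ref{prop:auto}(c), which you already mention.
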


\begin{proof}
Firstly, it is clear that the martingale problem associated with $X$ is well posed. Indeed, from \cite[Theorem 6.3.4]{varadhan06}, it is sufficient to obtain the Lipschitz estimate
\begin{equ}\label{e:lip_well}
|\atilde(x) -\atilde(z)| + |(\sigma \sigma^T)(x) - (\sigma \sigma^T)(z)| \lesssim |x-z|\;.
\end{equ}
But this is immediate from Proposition~\ref{prop:auto}(c) and the regularity of
$a$ and $b$.
\par
By \cite[Theorem 11.1.4]{varadhan06}, to prove convergence it is sufficient to show that the coefficients ${\atilde}_R$ and $\sigma_R\sigma_R^T$ converge uniformly on compact sets to $\atilde$ and $\sigma \sigma^T$ respectively. But $\atilde_R(x) = \atilde(x)$ and $\sigma_R\sigma_R^T(x) = \sigma \sigma^T(x)$ for all $|x| \leq R$. Hence, by taking $R$ sufficiently large, convergence on compact sets is immediate. 
\end{proof}

\begin{proof}[Proof of Theorem \ref{thm:abstract}]
We now show that $\xeps \to_w X$ in the supnorm topology, as $\eps \to 0$. Fix a closed set $U \subset C([0,T], \reals^d)$. By the portmanteau lemma, it suffices to show that
\begin{equ}\label{e:final_port}
\limsup_{\eps \to 0} \mu (x_\eps \in U) \leq \BP (X \in U)\;.
\end{equ}
For $R>|\xi|$, we let
$x_{\eps,R}$ be the solution to \eqref{e:fastslow} with $a,b$ replaced by $a_R,b_R$. 
By uniqueness and continuity of solutions to ODEs,
for each fixed $\eps$, either $x_{\eps}(t) = x_{\eps,R}(t)$ for all $0\leq t\leq T$ or $\sup_{t\leq T} |x_{\eps,R}(t)| \geq R$. Thus we have
\begin{equ}
 \BPmnu (x_\eps \in U)  \leq  \BPmnu (x_{\eps,R} \in U) + \BPmnu \big( \sup_{t\leq T} |x_{\eps,R}(t)| \geq R \big) \;.
\end{equ}
But, since $a_R, b_R$ satisfy the requirements of Theorem \ref{thm:fastslow_besov}, for each fixed $R$ we have that $x_{\eps,R} \to_w X_R$ in the supnorm topology as $\eps\to0$. Since $x\mapsto \sup_{t\leq T} |x(\cdot)|$ is a continuous function in the supnorm topology, it follows from the portmanteau lemma that
\begin{equs}
 \limsup_{\eps \to 0} \BPmnu (x_\eps \in U) & \leq  \limsup_{\eps\to 0}\BPmnu (\xepsR \in U) + \limsup_{\eps \to 0}\BPmnu \big( \sup_{t\leq T} |x_{\eps,R}(t)| \geq R \big)\\
& \leq \BP (X_R \in U) + \BP \big( \sup_{t\leq T} |X_{R}(t)| \geq R \big)
\;.
\end{equs}
Taking $\limsup_{R\to\infty}$ on both sides and using Lemma \ref{lem:XR_lim} (and again the portmanteau lemma), 
\begin{equ}
\limsup_{\eps \to 0} \BPmnu (x_\eps \in U) \leq  \BP (X \in U) + \limsup_{R \to \infty }\BP \big( \sup_{t\leq T} |X_R(t)| \geq R \big)\;.\end{equ}

But $X_R$ solves the SDE \eqref{e:sde} with coefficients $\atilde_R$ and $\sigma_R$ that are, by an argument identical to~\eqref{e:lip_well}, Lipschitz and bounded. It follows from \cite[Theorem 2.4.4]{mao07} that
\begin{equ}\label{e:L1_XR}
\BE \sup_{t\leq T} |X_R(t)| \le K\;,
\end{equ}
 where $K$ depends only on $T, \xi$ and $\sup_{x\in\reals^d} (|\atilde_R(x)|\vee |\sigma_R(x)|)$. 
 By Proposition~\ref{prop:auto}(c),
 \begin{equs}
 |\sigma_R(x)|^2 &\leq \sum_{i,j=1}^d 2 |\auto(b^i_R(x,\cdot,),b^j_R(x,\cdot,))|\\ &\lesssim \sum_{i,j=1}^d  \norm{b^i_R(x,\cdot,)}_{C^\kappa}\norm{b^j_R(x,\cdot,)}_{C^\kappa} \lesssim \norm{b_R}^2_{C^{0,\kappa}} \lesssim \norm{b}^2_{C^{0,\kappa}}
 \end{equs}
 and we can similarly bound $\sup_x|\atilde_R(x)|$ uniformly in $R$. It follows that the constant $K$ in~\eqref{e:L1_XR} can be chosen uniformly in $R$. 
Thus 
\begin{equ}
\limsup_{R \to \infty }\BP \big( \sup_{t\leq T} |X_R(t)| \geq R \big) \leq  \limsup_{R \to \infty} \BE \sup_{t\leq T}|X_R(t)| / R = 0\;,
\end{equ}
which proves \eqref{e:final_port}. 
\end{proof}

\subsection*{Acknowledgements}

{
The research of DK was supported by ONR grant N00014-12-1-0257. IM was supported in part by the European Advanced Grant \emph{StochExtHomog} (ERC AdG 320977).
}

\bibliographystyle{./Martin}
\bibliography{./fastslow}

\end{document}